\newtheorem{theorem}{Theorem}
\newtheorem{corollary}[theorem]{Corollary}
\newtheorem{proposition}[theorem]{Proposition}
\newtheorem{example}[theorem]{Example}
\theoremstyle{definition}
\newtheorem{definition}{Definition}
\theoremstyle{remark}
\newtheorem{remark}{Remark}
\newcommand{\rr}{\mathbb{R}}
\newcommand{\dbs}{\overline{\dim}_B\hspace{.05cm}}
\newcommand{\dbi}{\underline{\dim}_B\hspace{.05cm} }
\newcommand{\diam}{\textrm{diam}\: }
\newcommand{\dist}{\textrm{dist}}
\begin{document}
\title[Quasi-Assouad Dimension]{Properties of Quasi-Assouad Dimension}

\author{Ignacio Garc\'{\i}a}
\address{Centro Marplatense de Investigaciones Matem\'aticas (CIC),
Facultad de Ciencias Exactas y Naturales\\
Instituto de Investigaciones F\'{\i}sicas de Mar del Plata (CONICET)\\
Universidad Nacional de Mar del Plata, Argentina}
\email{nacholma@gmail.com}

\author{Kathryn Hare}
\address{Dept. of Pure Mathematics\\
University of Waterloo\\
Waterloo, Ont., Canada N2L 3G1}
\email{kehare@uwaterloo.ca}
\subjclass[2010]{Primary 28A80, 28A78}
\keywords{Assouad dimension, weak tangents, orthogonal projections}

\maketitle

\begin{abstract}
The connections between quasi-Assouad dimension and tangents are
studied. We apply these results to the calculation of the quasi-Assouad dimension for a class of planar self-affine sets. We also show that sets with decreasing gaps have quasi-Assouad dimension $0$ or $1$ and exhibit an example of a set in the plane whose quasi-Assouad dimension is smaller than that of its projection onto the $x$-axis, showing that quasi-Assouad dimension may increase under Lipschitz
mappings. Moreover, for closed sets, we show that the Hausdorff dimension is an upper bound for the lower-Assouad dimension.
\end{abstract}

\section{Introduction}

Recently, a number of authors have investigated the Assouad and
lower-Assouad dimensions of subsets of $\mathbb{R}^{d}$; see \cite{Fr} and
the many papers referenced there. These dimensions differ from the
well-known Hausdorff and box dimensions as they provide information about
the extreme behaviour of the local geometry of the set. The quasi-Assouad
dimensions give less extreme, but still local, geometric information. As in
the case of Assouad dimensions, quasi-Assouad dimensions take into account
relative scales locally at any position in the set, but the difference is
that the `minimal depth' of the relative scales increases as the size of the
considered neighbour decreases, and this has a moderating effect. These
dimensions were introduced by L\"{u} and Xi in \cite{LX} and shown to be an
invariant quantity under quasi-Lipschitz maps, unlike the Assouad dimension. In this note, we develop
further properties of the quasi-Assouad dimensions.

To give their definitions we first introduce notation. For a subset $%
A\subseteq \mathbb{R}^{d}$, let $N_{r}(A)$ be the least number of balls of
radius $r$ needed to cover $A$ and let $N_{r,R}(E)=\max_{x\in E}N_{r}(E\cap
B(x,R))$. The upper and lower box dimensions, denoted $\overline{\dim }_{B}%
\hspace{0.05cm}E$ and $\underline{\dim }_{B}\hspace{0.05cm}E$ respectively,
are given by 
\begin{equation*}
\overline{\dim }_{B}E=\limsup_{r\rightarrow 0}\frac{\log N_{r}(E)}{\log r},%
\text{ }\underline{\dim }_{B}\hspace{0.05cm}E=\liminf_{r\rightarrow 0}\frac{%
\log N_{r}(E)}{\log r}
\end{equation*}%
If the two values coincide, we refer to it as the box dimension, $\dim _{B}E$%
. Given $0\leq \delta <1$, define 
\begin{equation*}
\overline{h_{E}}(\delta )=\inf \bigg\{\alpha \geq 0:\exists b,c>0\text{ such
that }\forall \text{ }0<r\leq R^{1+\delta }\leq R\leq b,N_{r,R}(E)\leq
c\left( \frac{R}{r}\right) ^{\alpha }\bigg\}
\end{equation*}%
and 
\begin{equation*}
\underline{h_{E}}(\delta )=\sup \bigg\{\alpha \geq 0:\exists b,c>0\text{
such that }\forall \text{ }0<r\leq R^{1+\delta }\leq R\leq b,N_{r,R}(E)\geq
c\left( \frac{R}{r}\right) ^{\alpha }\bigg\}.
\end{equation*}%
These functions are monotone and by taking limits we obtain the
quasi-Assouad dimension of $E$, $\dim _{qA}E$, and the quasi-lower Assouad
dimension of $E$, $\dim _{qL}E$: 
\begin{equation*}
\dim _{qA}E=\lim_{\delta \rightarrow 0}\overline{h_{E}}(\delta )\text{, }%
\dim _{qL}E=\lim_{\delta \rightarrow 0}\underline{h_{E}}(\delta ).
\end{equation*}%
The Assouad and the lower-Assouad dimensions are given by 
\begin{equation*}
\dim _{A}E=\overline{h_{E}}(0)\text{, }\dim _{L}E=\underline{h_{E}}(0).
\end{equation*}

For any bounded set $E$ these dimensions are ordered in the following way: 
\begin{equation}
\dim _{L}E\leq \dim _{qL}E\leq \underline{\dim }_{B}\hspace{0.05cm}E\leq 
\overline{\dim }_{B}\hspace{0.05cm}E\leq \dim _{qA}E\leq \dim _{A}E.
\label{ordering}
\end{equation}%
It is known that all these dimensions coincide for self-similar sets
satisfying the open set condition, but for more general sets, strict
inequalities are possible throughout, c.f. \cite{Fr, LX} and Example \ref%
{strict} in Section \ref{section:further_results}. Moreover, in that section we show that inequality $\dim _{qL}E\leq \dim _{H}E$ holds for closed sets. This inequality is not true in general, since $\dim_H\mathbb{Q}=0$ but $\dim_{qL}\mathbb{Q}=1$.

Generalizations of tangents of a set are a\ useful concept when considering the
Assouad dimension. These are essentially limits, in the Hausdorff metric, of
sequences of magnifications of local parts of the set. Tangents often have
simpler structure than the original set and their Assouad dimensions are
always lower bounds for the Assouad dimension of the original set (\cite{Fr,
Ma, MT}). In Section \ref{section:fast_tangent}, we show that if the convergence to the tangent is
sufficiently fast, and the lower-Assouad and Assouad dimensions of the
tangent coincide, then this value is a lower (upper) bound for the
quasi-(lower) Assouad dimension of the original set. From this, we show that
the quasi-(lower) Assouad dimensions of a class of self-affine carpets are the same as
their Assouad dimensions.

In Section \ref{section:further_results} we observe that if $\overline{\dim }_{B}\hspace{0.05cm}E=0$,
then the same is true for the quasi-Assouad (but not necessarily, the
Assouad) dimension of $E$ and show that the quasi-Assouad dimension of a
sequence in $\mathbb{R}$ with decreasing gaps is either $0$ or $1$. This
same dichotomy was shown to hold for Assouad dimensions (although not
necessarily with the same value for the Assouad and quasi-Assaoud
dimensions) in \cite{GHM}. We also give an example to illustrate that, like
the Assouad dimension (but not the Hausdorff or box dimensions), the
quasi-Assouad dimension can rise when taking projections.

\section{Quasi-Assouad dimension and Tangent structure}\label{section:fast_tangent}

In this section we find bounds on the quasi-Assouad dimension of a set by
dimensions of its `tangents'. We begin by recalling some definitions and
results. Given $X,Y\subset \mathbb{R}^{d}$ compact subsets, their Hausdorff
distance is 
\begin{equation*}
\dist_{H}(X,Y)=\max \{p_{H}(X,Y),p_{H}(Y,X)\},
\end{equation*}%
where 
\begin{equation*}
p_{H}(X,Y)=\sup_{x\in X}\inf_{y\in Y}\Vert x-y\Vert .
\end{equation*}

\begin{definition}
Let $F$ and $\hat{F}$ be compact subsets of $\mathbb{R}^{d}$. We say that $%
\hat{F}$ is a \emph{weak tangent} of $F$ if there is a compact subset $%
X\subset \mathbb{R}^{d}$, that contains both $F$ and $\hat{F}$, and a
sequence of bi-Lipschitz maps $T_{k}:\mathbb{R}^{d}\rightarrow \mathbb{R}%
^{d} $, with Lipschitz constants $a_{k},b_{k}$ satisfying 
\begin{equation*}
a_{k}\Vert x-y\Vert \leq \Vert T_{k}(x)-T_{k}(y)\Vert \leq b_{k}\Vert
x-y\Vert
\end{equation*}%
and $\sup b_{k}/a_{k}<\infty $, such that $\dist_{H}(T_{k}(F)\cap X,\hat{F}%
)\rightarrow 0$ as $k\rightarrow \infty $. In the case that the contraction
ratios, $b_{k}$, are unbounded we call $\hat{F}$ a \emph{generalized tangent} of $F$.
\end{definition}

The usefulness of tangents is that they can be used to obtain lower bounds
for the Assouad dimension of the original set, namely

\begin{theorem}
\label{thmtangent} If $\hat{F}$ is a weak tangent of $F,$ then 
\begin{equation}
\dim _{A}\hat{F}\leq \dim _{A}F.  \label{boundtangent}
\end{equation}%
If, in addition, there is some $\theta >0$ such that for all $r>0$ and $x\in 
\hat{F}$ there is some $y\in \hat{F}$ such that $B(y,r\theta )\subseteq
B(x,r)\cap X$, then $\dim _{L}F\leq \dim _{L}\hat{F}$.
\end{theorem}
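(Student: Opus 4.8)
\textbf{Proof plan for Theorem \ref{thmtangent}.}

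The plan is to prove the two assertions separately, treating the Assouad bound \eqref{boundtangent} first and the lower-Assouad bound second, since the latter requires the extra ``interior thickness'' hypothesis. For the upper bound $\dim_A \hat{F} \le \dim_A F$, I would argue that any local covering configuration witnessing a large Assouad dimension in $\hat{F}$ can be pulled back, via the maps $T_k$, to a comparable configuration in $F$. Concretely, fix $\alpha < \dim_A \hat{F}$, so that for suitable scales $0 < r \le R$ and centers $x \in \hat{F}$ we have $N_{r,R}(\hat F) \gtrsim (R/r)^\alpha$, meaning $\hat F \cap B(x,R)$ contains many $r$-separated points. Because $\dist_H(T_k(F)\cap X, \hat F)\to 0$, for $k$ large every point of $\hat F \cap X$ is within a small distance of a point of $T_k(F)$, so I can transfer these separated points into $T_k(F)$ with a controlled loss in separation and in ball radii. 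I would then apply $T_k^{-1}$: since $T_k$ is bi-Lipschitz with constants $a_k,b_k$ and $\sup b_k/a_k < \infty$, the image under $T_k^{-1}$ of an $r$-separated set inside $B(x,R)$ is an $(r/b_k)$-separated set inside a ball of radius roughly $R/a_k$ in $F$. The crucial point is that the ratio of the new scales, $(R/a_k)/(r/b_k) = (b_k/a_k)(R/r)$, is comparable to $R/r$ uniformly in $k$ by the hypothesis $\sup b_k/a_k<\infty$, so the exponent $\alpha$ is preserved. Letting $k\to\infty$ and $\alpha \uparrow \dim_A \hat F$ then yields $\dim_A \hat F \le \dim_A F$.

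For the second assertion, $\dim_L F \le \dim_L \hat{F}$, the direction of the inequality is reversed, so now I want to transfer \emph{sparseness} upward. Fix $\alpha > \dim_L \hat F$; then there are scales $r \le R$ and a center $x \in \hat F$ for which $\hat F\cap B(x,R)$ can be covered by very few (at most $\lesssim (R/r)^\alpha$) balls of radius $r$, witnessing small lower dimension. The goal is to produce a comparably economical covering of a relative ball in $F$. Here the Hausdorff convergence again lets me replace $\hat F$ by $T_k(F)\cap X$ up to a small perturbation of the covering radius, and then $T_k^{-1}$ transports the covering to $F$ while the bi-Lipschitz constants keep the number of covering balls and the scale ratio comparable, exactly as in the first part. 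The role of the new hypothesis, that each $B(x,r)\cap X$ contains a ball $B(y,r\theta)\subseteq B(x,r)\cap X$ centered at a point $y\in\hat F$, is to guarantee that the covering of $\hat F \cap B(x,R)$ we start from is genuinely supported near $X$ and does not ``escape'' through the intersection with $X$: it ensures the lower-dimensional counting is not artificially deflated by points of $F$ whose images fall outside $X$, so that the covering we pull back really does cover the relevant relative ball in $F$.

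The main obstacle I expect is the careful bookkeeping of the intersection with $X$ in the lower-Assouad argument. For the Assouad (upper) bound, intersecting with $X$ only discards points and hence only helps, so the transfer of a separated set is clean; but for the lower bound one is covering, and points of $T_k(F)$ lying outside $X$ are invisible to $\hat F = \lim (T_k(F)\cap X)$, which could in principle leave part of the pulled-back relative ball uncovered. The thickness condition $B(y,r\theta)\subseteq B(x,r)\cap X$ is precisely what rules this out, by forcing enough of $X$ to sit inside each small ball, and the delicate step is quantifying how this interacts with the radius perturbations coming from $\dist_H \to 0$. I would organize this as a pair of parallel lemmas on transferring separated sets and coverings under bi-Lipschitz maps with bounded eccentricity, and then invoke them at the two scales $r$ and $R$; once the scale-ratio invariance $(b_k/a_k)(R/r)\asymp R/r$ is in hand, the dimension inequalities follow by taking limits in $k$ and then in $\alpha$.
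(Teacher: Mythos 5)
Your plan is correct, but note that the paper contains no proof of Theorem \ref{thmtangent} to compare against: the result is quoted from Fraser \cite[Prop.~7.7]{Fr} (with the similarity version due to Mackay and Tyson \cite{Ma, MT}), and your two transfer steps---pulling back separated sets for the Assouad bound, and pulling back economical coverings for the lower bound, with the scale-ratio distortion absorbed by $\sup_k b_k/a_k<\infty$ and the thickness hypothesis supplying a ball $B(y,R\theta)\subseteq B(x,R)\cap X$ inside which every point of $T_k(F)$ automatically lies in $T_k(F)\cap X$ and is hence close to $\hat F\cap B(x,R)$---are exactly that standard argument, and the same machinery the paper itself adapts to prove its Theorem \ref{theotan}. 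Your diagnosis of the asymmetry in how $X$ enters is also right: the upper bound uses only the one-sided estimate $p_{H}(\hat F,T_k(F))\to 0$ (which is why it survives for pseudo-tangents, as the paper exploits in Remark \ref{remtheotan}), whereas for the lower bound you additionally need, when writing up the details, a point $y_k\in T_k(F)\cap X$ near the thick-ball centre $y$ so that the relative ball of $F$ you end up covering is centred at the genuine point $T_k^{-1}(y_k)\in F$ rather than at a pullback of $x$.
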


The above result is due to Fraser \cite[Prop. 7.7]{Fr}. A version using
similarities to define tangents (the usual definition) was previously
obtained by Mackay and Tyson, see \cite[Prop. 2.9]{Ma} or \cite[Prop. 6.1.5]%
{MT}.

There is no loss of generality in assuming that $X=[0,1]^{d}$ since the Assouad dimension of any set can be
characterized by its tangents in this sense. To be specific 
\begin{equation}
\dim _{A}F=\max \bigl\{\dim_H \hat{F}:\hat{F}\ \text{ is a generalized tangent of }F\text{
with }X=[0,1]^{d}\bigr\}.  \label{furstenberg}
\end{equation}%
We refer the reader to \cite{CWW}, \cite{Fu1}, \cite{Fu2} and \cite{KOR}, noting that
the terminology there is quite different: microsets and star dimension are
used instead of tangents and Assouad dimension. Observe that from (\ref%
{boundtangent}) and (\ref{furstenberg}) any compact set $F$ has a generalized tangent $%
\hat{F}$ for which 
$
\dim _{H}\hat{F}=\dim _{A}\hat{F}=\dim _{A}F.
$


We call the set $\hat{F}$ a \emph{pseudo-tangent} of $F$ if there is a
sequence of bi-Lipschitz maps $T_{k},$ as above, with $\sup b_{k}=\infty $,   such that we have the
`one-sided' pseudo-distance, $p_{H}(\hat{F},T_{k}(F))\rightarrow 0$ as $%
k\rightarrow \infty $. The convenience of this definition is that it
does not require the intersection with the set $X$ or the two-sided
comparison of distance, and still the inequality $\dim _{A}\hat{F}\leq \dim
_{A}F$ holds in this case; see \cite{FHOR}, where the definition and proof
are made for similarities, but the same proof applies in the bi-Lipschitz
setting.

Returning to the quasi-Assouad dimension, we first give an example where (%
\ref{boundtangent}) fails when Assouad is replaced by quasi-Assouad
dimension. The following result is useful for this task; a more general
version in $\mathbb{R}^{d}$ was obtained recently (\cite{FY3}), but here we
include a different proof.

\begin{proposition}
A subset $F\subseteq \mathbb{R}$ has Assouad dimension $1$ if and only if $%
[0,1]$ is a generalized tangent of $F$.
\end{proposition}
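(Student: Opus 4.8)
The plan is to prove the two implications separately. The reverse direction is the easy one: if $[0,1]$ is a generalized tangent of $F$, then by the inequality \eqref{boundtangent} (valid for generalized tangents via \eqref{furstenberg}) we have $\dim_A F \geq \dim_A [0,1] = 1$, and since $F \subseteq \mathbb{R}$ forces $\dim_A F \leq 1$, we conclude $\dim_A F = 1$. The substance of the proposition lies in the forward direction, where I must manufacture an explicit generalized tangent equal to $[0,1]$ out of the sole hypothesis that $\dim_A F = 1$.

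\textbf{The forward direction.} Assume $\dim_A F = 1$. Since $\dim_A F = \overline{h_F}(0)$, for every $n$ the failure of any bound with exponent $\alpha_n = 1 - 1/n$ supplies scales $0 < r_n \leq R_n \leq b_n$ and a center $x_n \in F$ such that $N_{r_n}(F \cap B(x_n, R_n))$ is at least of order $(R_n/r_n)^{1-1/n}$, with $R_n \to 0$. The idea is to use these configurations: inside the window $B(x_n, R_n)$ the set $F$ contains roughly $(R_n/r_n)^{1-1/n}$ points that are $r_n$-separated. I would then define the affine rescaling map $T_n(y) = (y - x_n)/R_n$ (or a bi-Lipschitz variant centered to land the window on $[0,1]$ after a translation), so that $T_n(F) \cap [0,1]$ becomes an increasingly dense, increasingly fine $r_n/R_n$-net. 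Because the number of separated points grows almost like the full $(R_n/r_n)$ count, the gaps left uncovered must shrink: one shows the rescaled sets $T_n(F) \cap [0,1]$ satisfy $p_H([0,1], T_n(F)) \to 0$, and by passing to a subsequence (compactness in the Hausdorff metric on closed subsets of a fixed cube), the limit is all of $[0,1]$. The contraction ratios $b_n \sim 1/R_n \to \infty$, so this is genuinely a \emph{generalized} tangent, consistent with the definition.

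\textbf{Main obstacle.} The delicate point is converting the covering-number lower bound into genuine Hausdorff-metric density of $[0,1]$, i.e. ruling out that the many $r_n$-separated points cluster into a sparse pattern (for example, a fat Cantor arrangement) that leaves a macroscopic gap surviving in the limit. A counting argument is needed: if, after rescaling to $[0,1]$, some interval of fixed length $\eta > 0$ were disjoint from $T_n(F)$ for infinitely many $n$, then $F \cap B(x_n, R_n)$ would miss a sub-window of length $\eta R_n$, and one must check this is compatible with an exponent arbitrarily close to $1$. Here I would exploit that the Assouad exponent equals $1$ exactly — so the deficiency forced by a persistent gap of relative size $\eta$ caps the achievable exponent strictly below $1$, contradicting $\dim_A F = 1$. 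Quantitatively, removing a proportion $\eta$ of the window reduces the count by a factor bounded away from the full power, which is the contradiction. The remaining bookkeeping — choosing the subsequence, verifying $\sup b_k / a_k < \infty$ (immediate, since each $T_n$ is a similarity with ratio $1/R_n$), and confirming $X = [0,1]$ suffices by \eqref{furstenberg} — is routine.
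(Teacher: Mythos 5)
Your reverse direction is fine and matches the paper: a generalized tangent is in particular a weak tangent, so (\ref{boundtangent}) gives $\dim _{A}F\geq \dim _{A}[0,1]=1$, and $F\subseteq \mathbb{R}$ gives the other inequality. The genuine gap is in the forward direction, precisely at the step you call the main obstacle. Your claim that a persistent gap of relative size $\eta $ in the windows $B(x_{n},R_{n})$ would ``cap the achievable exponent strictly below $1$'' is false: deleting a proportion $\eta $ of a window reduces the covering number by at most the constant factor $(1-\eta )$, not by a power. One can have $N_{r}\bigl(F\cap B(x,R)\bigr)\geq (1-\eta )R/r=c\,(R/r)^{1}$ while an interval of length $\eta R$ inside the window is entirely disjoint from $F$. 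Concretely, if each window $B(x_{n},R_{n})$ meets $F$ exactly in an interval of length $R_{n}/2$ occupying the left half of the window, the counts realize the full exponent $1$ (with constant $1/2$), yet your rescaled sets $T_{n}(F)\cap \lbrack 0,1]$ converge to $[0,1/2]$, not to $[0,1]$. More generally, the subsequential Hausdorff limit of your rescaled windows could be a fat Cantor set: such a limit has Hausdorff dimension $1$, consistent with the microset identity (\ref{furstenberg}), but empty interior, so no contradiction with $\dim _{A}F=1$ can be extracted from covering numbers alone. The defect is structural: the definition of Assouad dimension controls how many small balls are needed, but says nothing about where in the window they sit.

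The paper's proof uses a genuinely different ingredient that supplies exactly the missing location information: the Mackay--Tyson theorem (\cite[Thm. 5.1.8]{MT}) that a subset of $\mathbb{R}$ has Assouad dimension strictly less than $1$ if and only if it is uniformly disconnected. From $\dim _{A}F=1$ one obtains, for each $k$, a chain of points $z_{0}^{k}<z_{1}^{k}<\dots <z_{n}^{k}$ in $F$ with every consecutive gap smaller than $|z_{n}^{k}-z_{0}^{k}|/k$ and $|z_{n}^{k}-z_{0}^{k}|\rightarrow 0$ (the latter ensuring the tangent is generalized, not merely weak). The chain condition says that no gap of relative size $\geq 1/k$ occurs anywhere in $[z_{0}^{k},z_{n}^{k}]$ --- exactly the statement your counting argument cannot produce --- and rescaling $[z_{0}^{k},z_{n}^{k}]$ affinely onto $[0,1]$ makes the image $1/k$-dense in all of $[0,1]$, whence Hausdorff convergence to $[0,1]$. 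If you wanted to salvage your approach, you would have to iterate the gap analysis across scales (replace any window containing a macroscopic gap by a sub-window on one side of it and repeat), but that iteration is precisely the proof that uniform disconnectedness forces $\dim _{A}<1$; you would be reproving the Mackay--Tyson theorem rather than bypassing it.
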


\begin{proof}
By (\ref{boundtangent}), the Assouad dimension of any tangent is a lower
bound for the Assouad dimension of the set. On the other hand, by \cite[Thm.
5.1.8]{MT}, the assumption that $\dim _{A}F=1$ is equivalent to the fact
that $F$ is not uniformly disconnected. Thus for each $k$ there are distinct
points, $z_{0}^{k},\ldots ,z_{n}^{k}$ $\in $ $F$, such that $%
|z_{i}-z_{i+1}|<|z_{0}-z_{n}|/k$, where $z_{0}<z_{i}<z_{n}$ and $%
|z_{0}-z_{n}|$ goes to $0$. The last condition ensures that $[0,1]$ is a generalized
tangent (and not just a weak tangent).

Let $T_{k}$ be the affine transformation that maps $[z_{0},z_{n}]$ to $[0,1]$. For $x_{i}=T_{k}(z_{i})$ we have 
\begin{equation*}
|x_{i}-x_{i+1}|=\frac{|z_{i}^{k}-z_{i+1}^{k}|}{|z_{n}^{k}-z_{0}^{k}|}<\frac{1%
}{k}
\end{equation*}%
and hence for $0\leq j<k$ each interval $\bigl[j/k,(j+1)/k\bigr)$ contains
at least one of the $x_{i}$'s. This shows the sets $T_{k}(\{z_{0}^{k},\ldots
,z_{n}^{k}\})$ converge in the Hausdorff metric to $[0,1]$ and therefore, $%
\dist_{H}(T_{k}(F)\cap \lbrack 0,1],[0,1])\rightarrow 0$ as $k\rightarrow
\infty $.
\end{proof}

\begin{corollary}
If $F\subseteq \mathbb{R}$ has $\dim _{qA}F<1=\dim _{A}F,$ then $\hat{F}%
=[0,1]$ is a generalized tangent with 
\begin{equation*}
1=\dim _{H}\hat{F}=\dim _{qA}\hat{F}>\dim _{qA}F.
\end{equation*}
\end{corollary}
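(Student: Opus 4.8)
The plan is to apply the Proposition just proved, combined with the standing facts about the quasi-Assouad dimension of $[0,1]$. The hypothesis gives us a set $F \subseteq \mathbb{R}$ with $\dim_A F = 1$, so by the Proposition the unit interval $\hat F = [0,1]$ is a generalized tangent of $F$. It only remains to verify the displayed chain of equalities and the strict inequality.

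First I would record that $\hat F = [0,1]$ is a genuine generalized tangent (not merely a weak tangent), which is exactly what the Proposition delivers, since its proof arranges $|z_0^k - z_n^k| \to 0$ so that the contraction ratios $b_k$ are unbounded. Next, the equalities $1 = \dim_H \hat F = \dim_{qA}\hat F$ are immediate: the interval $[0,1]$ has Hausdorff dimension $1$, and because all six dimensions in the ordering \eqref{ordering} coincide for $[0,1]$ (indeed $\dim_H [0,1] = \dim_B [0,1] = \dim_{qA}[0,1] = 1$ is classical, and follows from \eqref{ordering} squeezing everything between $\underline{\dim}_B$ and $\dim_{qA}$ down to the common value $1$), the quasi-Assouad dimension of $\hat F$ equals $1$ as well. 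Finally the strict inequality $\dim_{qA}\hat F > \dim_{qA}F$ is nothing more than $1 > \dim_{qA}F$, which is precisely the hypothesis $\dim_{qA}F < 1$.

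There is essentially no obstacle here: the Corollary is a direct packaging of the Proposition together with the trivial computation of the dimensions of the interval, so the only thing to be careful about is citing the correct direction of the Proposition (the "only if" direction, which produces the tangent from the assumption $\dim_A F = 1$) and noting that the construction yields a generalized rather than merely weak tangent. In a written proof I would simply say: since $\dim_A F = 1$, the Proposition shows $[0,1]$ is a generalized tangent of $F$; then $\dim_H [0,1] = \dim_{qA}[0,1] = 1$ by \eqref{ordering}, while $\dim_{qA}F < 1$ by assumption, giving the strict inequality.
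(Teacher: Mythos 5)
Your proposal is correct and follows exactly the route the paper intends: the corollary is stated there as an immediate consequence of the Proposition (applying the direction that produces $[0,1]$ as a generalized tangent from $\dim_A F = 1$), combined with the trivial fact that $\dim_H[0,1]=\dim_{qA}[0,1]=1$ and the hypothesis $\dim_{qA}F<1$. Nothing is missing; the paper itself gives no separate proof because the deduction is precisely the packaging you describe.
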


Hence, not only (\ref{boundtangent}), but also (\ref{furstenberg}), fails
when Assouad dimension is replaced by quasi-Assouad dimension. An explicit
example can be given by considering the Cantor-like sets $F_{\alpha }$
constructed in the following way.

\begin{example}
Fix $0<\alpha <1$ and for each $k\geq 1$ put $D^{k}=\{(i_{1},\ldots
,i_{k}):1\leq i_{j}\leq 2^{j},1\leq j\leq k\}$. We construct the set
inductively, beginning with $I_{\phi }=[0,1]$ at step $0$. Having
constructed the step $k-1$ Cantor intervals, $I_{i_{1}\ldots i_{k-1}}$, for $%
(i_{1},\ldots ,i_{k-1})\in D^{k-1},$ to construct the Cantor intervals of
step $k$ we take the $2^{k}$ subintervals, $I_{i_{1}\ldots i_{k-1}l}$, $%
1\leq l\leq 2^{k},$ uniformly distributed inside $I_{i_{1}\ldots i_{k-1}},$
with equal lengths $2^{-k/\alpha }|I_{i_{1}\ldots i_{k-1}}|$. The
Cantor-like set $F_{\alpha }$ is given by 
\begin{equation*}
F_{\alpha }=\bigcap_{k\geq 1}\bigcup_{\mathbf{i}\in D^{k}}I_{\mathbf{i}}.
\end{equation*}%
Example 1.17 from \cite{LX} is a special case and it follows from the
arguments given there that $\dim _{qA}F_{\alpha }=\alpha $, while $\dim
_{A}F_{\alpha }=1$.
\end{example}

Moreover, this example turns out to be quite pathological with respect to
tangents.

\begin{proposition}
\label{proposition-quasi-no-tg} Any generalized tangent of the set $F_{\alpha }$ of the
example above is either a finite set or an interval.
\end{proposition}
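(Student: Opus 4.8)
The plan is to read off any tangent directly from the self-similar scale hierarchy of $F_\alpha$, exploiting its two defining features: the number of children per interval tends to infinity while the children occupy a vanishing fraction of their parent. Write $L_m$ for the common diameter of a level-$m$ interval, so that $L_m=\prod_{j=1}^m 2^{-j/\alpha}=2^{-m(m+1)/(2\alpha)}$, and let $g_m$ be the gap between two consecutive level-$m$ intervals inside a common level-$(m-1)$ parent. Since the $2^m$ children are uniformly distributed and their total length $2^mL_m=L_{m-1}2^{-m(1-\alpha)/\alpha}$ is negligible against $L_{m-1}$, one has $g_m\asymp L_{m-1}2^{-m}$. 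First I would record the ordering of scales $\cdots>L_{m-1}>g_m>L_m>g_{m+1}>\cdots$ together with the separations $L_m/g_m\to 0$ and $g_{m+1}/L_m\to 0$; these say that, viewed at the scale of a gap, each child interval looks like a point, while consecutive scales are superexponentially far apart.

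Next I would realise the tangent as a magnification of a shrinking window. Given a generalized tangent $\hat F$ realised by bi-Lipschitz maps $T_k$ with $b_k\to\infty$ and $\sup b_k/a_k=:C<\infty$, each $T_k$ is strictly monotone (being injective and continuous on $\mathbb R$), so $W_k:=T_k^{-1}([0,1])$ is an interval of length $\epsilon_k\le 1/a_k\to 0$ and $T_k(F_\alpha)\cap[0,1]=T_k(F_\alpha\cap W_k)$. Because the distortion is bounded by $C$, the map $T_k$ preserves ratios of distances inside $W_k$ up to the factor $C$: it sends a subinterval $J\subseteq W_k$ to an interval of magnified length comparable to $|J|/\epsilon_k$, and a finite family of points with mutual spacings of order $\eta\,\epsilon_k$ to points with spacings of order $\eta$. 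These encode the only two phenomena I need: \emph{shrinking} (a piece of diameter $o(\epsilon_k)$ collapses to a point) and \emph{filling} (a family of $\to\infty$ roughly uniformly distributed points with relative spacing $\to 0$ spanning an interval $J$ converges in the Hausdorff metric to $J$, being eventually $\eta$-dense in $J$ for every $\eta>0$).

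The heart of the argument is a case analysis on the window size. Let $m_k$ be determined by $g_{m_k+1}\le\epsilon_k<g_{m_k}$ and, after passing to a subsequence, assume we are always in one of the two bands cut out by $L_{m_k}$: case (A), $L_{m_k}\le\epsilon_k<g_{m_k}$, or case (B), $g_{m_k+1}\le\epsilon_k<L_{m_k}$. The crucial geometric point, which prevents ``interval-plus-extra-points'' hybrids, is that in both cases the window meets at most one level-$m_k$ interval: in case (A) because $\epsilon_k$ is smaller than the spacing $g_{m_k}$ between consecutive level-$m_k$ intervals, and in case (B) because $\epsilon_k<L_{m_k}\ll g_{m_k}$, so $W_k$ lies well inside a single level-$m_k$ interval. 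Hence all of $F_\alpha\cap W_k$ sits in one level-$m_k$ interval $I$, and I would analyse the magnified children of $I$. Passing to a further subsequence so that $L_{m_k}/\epsilon_k\to t\in[0,1]$ (case A) and $\epsilon_k/g_{m_k+1}\to N\in[1,\infty]$ (case B), there are four outcomes: in (A), if $t=0$ then $I$ shrinks to a point (finite set), while if $t>0$ its $2^{m_k+1}\to\infty$ uniformly spread children fill its magnified image, giving a single interval; in (B), if $N<\infty$ the window meets a bounded number of children, each shrinking to a point (finite set), while if $N=\infty$ the children fill, again giving an interval. In every case the subsequential limit is a finite set or an interval, and since the full sequence $T_k(F_\alpha)\cap[0,1]$ converges to $\hat F$, so does the subsequence, whence $\hat F$ is a finite set or an interval.

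I expect the main obstacle to be making the filling step rigorous under bi-Lipschitz rather than similarity maps: one must verify that bounded distortion keeps the images of the uniformly distributed children with consecutive gaps tending to $0$ relative to the image interval, so that they stay asymptotically dense and the Hausdorff limit is the whole (sub)interval rather than a Cantor-like remnant. The secondary point requiring care is the bookkeeping at the boundary of $W_k$ and of $[0,1]$ — a child or the parent $I$ may be only partially visible — but since partial intervals of $\mathbb R$ are again intervals and points are unaffected, these boundary effects only truncate the limit to a subinterval of $[0,1]$ and never create additional structure.
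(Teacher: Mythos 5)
Your proof is correct, but it takes a genuinely different route from the paper. You work \emph{forward from the domain}: you classify the scale of the preimage window $W_k=T_k^{-1}([0,1])$ (of length $\epsilon_k\approx a_k^{-1}$) against the hierarchy $L_{m-1}\gg g_m\gg L_m\gg g_{m+1}$, observe that in every regime the window meets at most one level-$m_k$ interval, and then get a clean dichotomy in each band — the visible piece either collapses ($t=0$ or $N<\infty$, giving a finite set) or its children fill ($t>0$ or $N=\infty$, giving an interval). The paper instead works \emph{backward from the tangent}: assuming $\hat F$ has a gap, it takes a maximal gap $J=(a,b)$, uses Hausdorff convergence to identify $J$ with the image of a construction gap $G_k$ of $F_\alpha$, and then invokes exactly the same two scale-separation facts you isolated ($L_k/g_k\to0$ and $g_{k+1}/g_k\to0$) to show that the neighbouring gaps of $\hat F$ are at least as long as $J$; maximality of $J$ then forces $\hat F$ to be a finite equidistributed set. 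The paper's argument is shorter and avoids the case bookkeeping and repeated subsequence extraction (which is legitimate in your argument, since the full sequence converges to $\hat F$ by hypothesis, so every subsequential limit you compute must equal $\hat F$); your argument is more mechanical but yields strictly more information — an explicit classification of which magnification scales produce finite tangents and which produce intervals — and it handles the bi-Lipschitz distortion transparently, since only the ratio bound $\sup b_k/a_k<\infty$ enters your filling and collapsing estimates. Two small points to tidy up in a final write-up: the definition of generalized tangent only gives $\sup b_k=\infty$, not $b_k\to\infty$, so you should pass to a subsequence at the outset (harmless, by the same convergence remark); and the separation claim behind ``at most one level-$m_k$ interval'' should note that non-sibling level-$m_k$ intervals are separated by at least a coarser gap $g_{m'}$, $m'<m_k$, which exceeds $g_{m_k}$ since the $g_m$ are decreasing.
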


\begin{proof}
For notational ease, we will omit the subscript $\alpha $. Let $\hat{F}$ be
a generalized tangent of $F$ and denote by $T_{k}$ the associated bi-Lipschitz maps. By
a gap of $\hat{F}$ we mean a bounded complementary (maximal) open interval
of the complement of $\hat{F}$. If $\hat{F}$ has no gaps, then it is an
interval (which may be a singleton). So, assume that $\hat{F}$ contains at
least one gap. Choose one of maximal length, say $J=(a,b)$, where $a,b\in 
\hat{F}$. Below we show that $i)$ if $x\in \hat{F}$ and $x<a$, then $a-x\geq
|J|$, or $ii)$ if $x\in \hat{F}$ and $x>b$, then $x-b\geq |J|$. This
statements imply, by the maximality of $J$, that $\hat{F}$ is a finite
equidistributed set in some subinterval of $[0,1]$.

We show only $i)$ since $ii)$ follows by a symmetric argument. So assume
that $x<a$ with $x\in \hat{F}$. One consequence of the Hausdorff convergence
is that for each sufficiently small $\epsilon >0$ (much smaller than $|J|$
and $a-x$), and all sufficiently large $k$, the sets $(a-\epsilon
,a+\epsilon )$ and $(b-\epsilon ,b+\epsilon )$ contain points of $T_{k}(F)$,
while $(a+\epsilon ,b-\epsilon )\cap T_{k}(F)=\emptyset $. Hence there is a gap $%
G_{k}$ of $F$ from some step ${k}$ in the construction of $F$ such that 
\begin{equation*}
(a+\epsilon ,b-\epsilon )\subseteq T_{k}(G_{k})\subseteq (a-\epsilon
,b+\epsilon ).
\end{equation*}%
Suppose $I_{k}^{L}$ is the closed interval from step ${k}$ in the
construction of $F$ that shares an endpoint with $G_{k}$ and is placed on
its left. The set $F$ has the property that $|I_{k}^{L}|/|G_{k}|\rightarrow
0 $ and as the maps $T_{k}$ are bi-Lipschitz with constants satisfying $\sup
b_k/a_k<\infty$, this ensures that $T_{k}(I_{k}^{L})\subseteq (a-2\epsilon
,a+2\epsilon )$ for large enough $k$. Note that the gap $G_{k}^{L},$
adjacent to $I_{k}^{L}$ but on its left, and whose existence is guaranteed because of the existence of $x$ and the choice of $\epsilon,$ is a gap from some step $\tilde k<k$. Note also that the lengths $g_{k-1}$ and $g_k$ of any gaps from steps $k-1$ and $k$ verify $g_k/g_{k-1}\to 0$ as $k\to\infty$.  In particular, for every $k$ sufficiently large we get $b_k|G_k|\le a_k|G_k^L|$.

Thus 
\begin{equation*}
|J|-2\epsilon \leq |T_{k}(G_{k})|\leq |T_{k}(G_{k}^{L})|,
\end{equation*}%
so any point in $\hat{F}$ to the left of $a$, and not contained in $%
(a-2\epsilon ,a+2\epsilon )$, must be at least distance $|J|-2\epsilon$
from $a$ for some constant $c>0$. But since $\epsilon $ can be made
arbitrarily small, an easy argument shows that $\hat{F}\cap (a-2\epsilon
,a+2\epsilon )=\{a\}$, and therefore we conclude that $i)$ holds by letting $%
\epsilon \rightarrow 0$.
\end{proof}

Note that the above example also illustrates that there is no way to select
a subfamily from the tangents to $F$ so that the identity (\ref{furstenberg}%
) remains valid for the quasi-Assouad dimension. However, we can extend
Theorem \ref{thmtangent} above to the quasi-Assouad dimensions if we
restrict to tangents for which the convergence is sufficiently
quick. 

\begin{definition}
We say that the generalized tangent $\hat{F}$ to the set $F$ is a \emph{generalized fast
tangent} if the following decay condition is satisfied: there are constants $%
C,\epsilon >0$ such that $${\rm dist}_{H}(T_{k}(F)\cap \lbrack 0,1]^{d},\hat{%
F})\leq Cb_{k}^{-\epsilon },$$ where $T_k$ and $b_k$ are as in the definition of a generalized tangent. In this case we say that $\hat F$ is a \emph{tangent of order $\epsilon$}. We similarly define \emph{fast pseudo-tangents 
}by the requirement that the pseudo-tangent $\hat{F}$ verifies $p_{H}(\hat{F}%
,T_{k}(F))\leq Cb_{k}^{-\epsilon }$. 
\end{definition}

 The relation between tangents and quasi-Assouad dimensions is given in the next result, where we have chosen to weaken some hypotheses for clarity of the exposition; see Remark \ref{remtheotan} for more general statements.

\begin{theorem}\label{theotan} Suppose $\hat{F}\subset\rr^d$ is a non-empty, generalized fast tangent of $F\subset\rr^d$ given by  bi-Lipschitz maps $T_{k}$ with Lipschitz constants $a_{k},b_{k}$ satisfying $\sup
b_{k}=\infty $ and $\sup b_{k}/a_{k}<\infty$. 

(i) 
For the quasi-Assouad dimension of $F$ we have the lower bound $$\underline{\dim }_{B}\hat{F}\leq \dim _{qA}F.$$ If, in addition, there is some $C'$ such that $
b_{k+1}\leq C'b_{k}$, then $$\dim_{qA}\hat{F}\leq \dim_{qA}F.$$

(ii) Assume that $\hat F$ contains an interior point of $[0,1]^d$. Then, for the quasi-lower Assouad dimension we have the upper bound $$\dim _{qL}F \leq \overline{%
\dim }_{B}\hspace{0.05cm}\hat{F}.$$ 
If, in addition, there is some $C'$ such that $
b_{k+1}\leq C'b_{k}$ and furthermore, there is $\theta>0$ such that for any $r\in(0,1]$ and $x\in\hat F$ there is $y\in \hat F$ such that $B(y, r\theta)\subset B(x,r)\cap[0,1]^d$, then $$\dim _{qL}F \leq \dim_{qL}\hat{F}.$$
\end{theorem}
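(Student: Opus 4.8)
The plan is to convert the fast-tangent decay condition into a quantitative comparison of covering numbers $N_{r,R}$ for $F$ and $\hat{F}$ at matched scales, and then feed this into the definitions of $\overline{h}$ and $\underline{h}$. The key observation is that applying the map $T_k$ rescales distances by a factor comparable to $b_k$ (since $\sup b_k/a_k<\infty$), so a ball $B(x,R)$ in $\hat{F}$ corresponds, via $T_k^{-1}$, to a ball of radius comparable to $R/b_k$ in $F$, and a covering of $F$ at scale $r/b_k$ inside that ball transports to a covering of $T_k(F)$ at scale comparable to $r$. First I would fix the target scales $r\le R$ for $\hat{F}$ and choose $k=k(R)$ so that the Hausdorff error $Cb_k^{-\epsilon}$ is small relative to $r$; the decay rate $b_k^{-\epsilon}$ is what guarantees that the tangent's geometry at scale $r$ is genuinely visible inside $T_k(F)$, so that points of $\hat{F}$ that are $r$-separated pull back to points of $F$ that are (comparably) $r/b_k$-separated.

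For part (i), to prove $\underline{\dim}_B\hat{F}\le\dim_{qA}F$ I would take any $\alpha<\underline{\dim}_B\hat{F}$, so that $N_r(\hat F)\gtrsim r^{-\alpha}$ for all small $r$; transporting an $r$-net of $\hat F$ back through $T_k^{-1}$ produces, inside a single ball $B(x_k,R_k)$ of $F$ with $R_k\approx 1/b_k$ and $r_k\approx r/b_k$, at least $\gtrsim r^{-\alpha}=(R_k/r_k)^{\alpha}$ separated points. As $b_k\to\infty$ one can realize a sequence of pairs $(r_k,R_k)$ with $r_k=R_k^{1+\delta_k}$ and, crucially, $\delta_k$ bounded below by a positive constant (because $R_k\approx 1/b_k$ is forced while $r_k$ can be taken as small as we like by choosing $r$ small), which yields $\overline{h_F}(\delta_0)\ge\alpha$ for some $\delta_0>0$ and hence $\dim_{qA}F\ge\alpha$. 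For the stronger conclusion $\dim_{qA}\hat F\le\dim_{qA}F$, the extra hypothesis $b_{k+1}\le C'b_k$ is what lets me realize \emph{every} small radius $R$ (not just the discrete values $1/b_k$) as a relative scale in $F$: given $R$, choose $k$ with $1/b_{k+1}<R\le 1/b_k$, and the bounded-ratio condition ensures $R$ and $1/b_k$ are comparable. I would then start from $\alpha<\dim_{qA}\hat F$, extract witnessing scales $\rho\le P^{1+\eta}\le P$ and a basepoint for $\hat F$ realizing $N_{\rho,P}(\hat F)\gtrsim(P/\rho)^{\alpha}$, and transport these to matching relative scales $r\approx\rho/b_k$, $R\approx P/b_k$ in $F$, taking care that the induced depth $\delta$ for $F$ stays bounded away from $0$ as the tangent depth $\eta\to0$.

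Part (ii) is dual. For $\dim_{qL}F\le\overline{\dim}_B\hat F$ I would take $\alpha>\overline{\dim}_B\hat F$, so $N_r(\hat F)\lesssim r^{-\alpha}$, and use the assumption that $\hat F$ contains an interior point of $[0,1]^d$: this guarantees that for the relevant balls $B(x,R)$ the intersection $B(x,R)\cap[0,1]^d$ does not artificially truncate the tangent, so the \emph{upper} covering bound for $\hat F$ transports to an upper bound $N_{r_k,R_k}(F)\lesssim(R_k/r_k)^{\alpha}$ at matched scales, forcing $\dim_{qL}F\le\alpha$. The final implication $\dim_{qL}F\le\dim_{qL}\hat F$ combines the bounded-ratio condition $b_{k+1}\le C'b_k$ (to realize all small $R$, exactly as in (i)) with the interior-ball condition involving $\theta$, which is precisely the hypothesis from Theorem \ref{thmtangent} ensuring a lower-type covering estimate transports correctly; I would mirror the argument for $\dim_{qA}\hat F\le\dim_{qA}F$ with inequalities reversed.

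\textbf{Main obstacle.} The hard part will be the bookkeeping that keeps the \emph{relative depth} $\delta$ (the exponent with $r=R^{1+\delta}$) uniformly bounded away from $0$ under the transport $T_k^{-1}$. Because passing to the tangent fixes the outer scale $R\approx 1/b_k$ while the inner scale is free, a naive transport could produce pairs $(r,R)$ whose ratio of logarithms degenerates; the decay hypothesis $\mathrm{dist}_H(T_k(F)\cap[0,1]^d,\hat F)\le Cb_k^{-\epsilon}$ and the condition $b_{k+1}\le C'b_k$ must be used quantitatively to show that the admissible depths for $F$ are bounded below by a positive constant depending only on $\epsilon$, $C'$ and the tangent depth. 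Getting this dependence explicit, and verifying that it survives the limit $\delta\to0$ in the definition of the quasi-Assouad and quasi-lower Assouad dimensions, is the crux; the covering-number comparisons themselves are routine once the scales are correctly matched.
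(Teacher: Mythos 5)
Your transport strategy is the same as the paper's (pull $r$-nets of $\hat F$ back through $T_k^{-1}$ and match scales), but the step you yourself single out as the crux --- keeping the transported depth bounded away from $0$ --- is handled in a way that fails precisely for the two unconditional statements. In your argument for $\underline{\dim}_B\hat F\le\dim_{qA}F$ you justify the depth bound by saying that $R_k\approx 1/b_k$ is forced ``while $r_k$ can be taken as small as we like by choosing $r$ small.'' That freedom does not exist: as you note one sentence earlier, $r$-separated points of $\hat F$ pull back to separated points of $F$ only when $r\gtrsim Cb_k^{-\epsilon}$, so for fixed $k$ the inner scale is bounded \emph{below} by the Hausdorff error, and hence the transported depth $\approx\log(1/r)/\log b_k$ is bounded \emph{above} by roughly $\epsilon$, not below. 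The correct mechanism is the reverse adaptation: for each $k$ (along a subsequence with $b_k\to\infty$) take the tangent scale to be exactly $r_k=Cb_k^{-\epsilon}$ --- legitimate because the lower box dimension hypothesis controls covering numbers at \emph{all} small scales --- so that the transported pair is $r'\approx b_k^{-(1+\epsilon)}$, $R'\approx b_k^{-1}$, whose depth is comparable to $\epsilon$ uniformly in $k$. This is what the paper does, checking $r_k/b_k\le\bigl((R+2r_k)/a_k\bigr)^{1+\epsilon/2}$ for large $k$; the same device gives the unconditional half of (ii).

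Your alternative --- fix the tangent scales first and then choose $k=k(R)$ so that the error is small relative to $r$ --- genuinely cannot prove the first halves of (i) and (ii), because it requires the bounded-ratio hypothesis $b_{k+1}\le C'b_k$, which is assumed only in the second halves; indeed your ``Main obstacle'' paragraph explicitly makes the depth constant depend on $C'$. If the $b_k$ grow with super-exponential gaps (say $b_{k+1}=2^{b_k}$), then the smallest admissible $k$ for a given $r$ can have $\log b_k$ of order $r^{-1/\epsilon}$, and the transported depth $\approx\log(1/r)/\log b_k$ tends to $0$ as $r\to 0$, so no positive $\delta_0$ with $\overline{h_F}(\delta_0)\ge\alpha$ is ever produced. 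The bounded-ratio condition is needed (and used by the paper) only for the conditional statements, where the witnessing scales of $\hat F$ are imposed on you and $k$ must be adapted to them; and even there the index $k$ has to be matched against the error scale $b_k^{-\epsilon}$ rather than against $1/b_k$ as in your sketch, since the separation of the transported points only survives when the relevant tangent scale dominates the Hausdorff error $Cb_k^{-\epsilon}$.
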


\begin{remark}
In order to better understand how the inhomogeneity of a set depends on the
scale, Fraser and Yu \cite{FY1} considered the 
refined parametric variants of the quasi-Assouad and quasi-lower Assouad dimensions, $\dim _{A}^{\theta }F$ and 
$\dim _{L}^{\theta }F$, for $\theta \in (0,1)$, known as the Assouad
spectrum and lower spectrum of $E$ respectively; see \cite{FY1} for the precise definitions. Similar statements as in Theorem \ref{theotan} can be made for $\dim _{A}^{\theta }F$ and 
$\dim _{L}^{\theta }F$, where the allowable $\theta $ depend on the choice of 
$\varepsilon $. For example, if $\hat F$ is a generalized fast tangent of order $\epsilon$ such that $\dbi \hat F=s$, then $\dim_A^{\theta}F\ge s$ for any $1/(1+\epsilon)\le \theta<1$. We leave the technical details for the reader. 
\end{remark}

Notation: When we write $x_k\approx X_k$ we mean there are positive constants $a,b$ such that $aX_k \le x_k \le bX_k$ for all $k$.

\begin{proof}
(i) Let \underline{$\dim $}$_{B}\hat{F}=s$. We may assume $s>0$, else the
result is trivial. Temporarily fix $\eta >0$. Then
\begin{equation*}
N_{r}(\hat{F})\geq r^{-(s-\eta )}
\end{equation*}%
for all sufficiently small $r$.

Since $\hat F$ is a generalized fast tangent, there are constants $C, \epsilon>0$ such that $\dist_H(T_k(F)\cap[0,1]^d, \hat F)\le Cb_k^{-\epsilon}$. Put $r_{k}=Cb_{k}^{-\varepsilon }$, 
let $R=\diam\hat{F}$ and pick any $y_{0}\in \hat{F}$. Then, for each $k$ we can find $m=m_{k}\approx \left( r_{k}^{-(s-\eta )}\right) $ points, $y_{1},\ldots ,y_{m}\in
B(y_{0},R)$ that are $3r_{k}$-separated. The assumption on $\hat F$ implies, in particular, that  $p_{H}(\hat{F}, T_k(F)%
)\leq r _{k}$, and this ensures that we can choose $x_{0},x_{1},\dots ,x_{m}\in T_{k}(F)$
such that $\Vert x_{i}-y_{i}\Vert \leq r_{k}$ for each $i=0,\ldots ,m$. We
have $x_{i}\in B(x_{0},R+2r_{k})$ and $\Vert x_{i}-x_{j}\Vert \geq r_{k}$
for all $1\leq i$ $\neq j\leq m$.

Taking preimages under $T_{k},$ we can find $z_{0},z_{1},\ldots ,z_{m}$ $\in
F$ such that 
\begin{equation*}
z_{i}\in B\bigl(z_{0},\frac{1}{a_{k}}(R+2r_{k})\bigr)\text{ \ \ and \ \ }%
\left\Vert z_{i}-z_{j}\right\Vert \geq \frac{r_{k}}{b_{k}}\text{.}
\end{equation*}%
This shows that 
\begin{equation*}
N_{\frac{r_{k}}{b_{k}}}\bigl(F\cap B(z_{0},\frac{1}{a_{k}}(R+2r_{k}))\bigr)%
\geq m.
\end{equation*}%
Note that $m\approx \left( \frac{(R+2r_{k})/a_{k}}{r_{k}/b_{k}}\right) ^{s-\eta
}$ (and hence $\dim _{A}F\geq s)$. An easy calculation shows 
\begin{equation*}
\frac{r_{k}}{b_{k}}\leq \left( \frac{1}{a_{k}}(R+2r_{k})\right) ^{1+\varepsilon /2}
\end{equation*} for large $k$, and, of course, $(R+2r_{k})/a_{k}\rightarrow 0,$ consequently $\dim _{qA}F\geq s-\eta $. As $%
\eta $ $>0$ is arbitrary, $\dim _{qA}F\geq s$.

Now suppose that $\dim _{qA}\hat{F}=t$ and there is some $C^{\prime }$ such
that $b_{k+1}\leq C^{\prime }b_{k}$ for all $k$. Again, temporarily fix $%
\eta >0$. Then there is some $0<\delta <1$ and arbitrarily small $r,R$ with $%
r\leq R^{1+\delta }$ and $y_{0}\in \hat{F}$ such that $N_{r}(B(y_{0},R)\cap 
\hat{F})\geq (R/r)^{t-\eta }$. Choose $k$ such that $b_{k+1}^{-\varepsilon
}\leq R\leq b_{k}^{-\varepsilon }$. As above, we deduce that for suitable $%
z_{0}\in F$ we have
\begin{equation}\label{bound}
N_{\frac{r}{b_{k}}}\bigl(F\cap B(z_{0},\frac{1}{a_{k}}(R+2r))\bigr)\geq
\left( \frac{R}{r}\right) ^{t-\eta }.
\end{equation}%
Since $R\approx b_{k}^{-\varepsilon }$ and $r\leq R^{1+\delta }$, one can easily
verify that $r/b_{k}\geq $ $\left( (R+2r)/a_{k}\right) ^{1+\sigma }$ for a
choice of $\sigma >0$ (depending on $\delta $ and $\varepsilon $). It
follows that  $\dim _{qA}F\geq t$.

(ii) Let $\dbs \hat{F}=s$.  Given any $\eta >0$ we have $N_{r_k}(\hat F)\leq r_{k}^{-(s+\eta)}$ if $k$ is sufficiently large, where as before $r_k=Cb_k^{-\epsilon}$ with $C$ and $\epsilon$ given by the definition of the generalized fast tangent. The generalized fast tangent hypothesis implies that for each $x\in T_k(F)\cap[0,1]$ there is some $\hat x\in\hat F$ such that $\|x-\hat x\|\le r_k$ and this ensures that $N_{3r_k}(T_k(F)\cap[0,1]^d)\le r_k^{-(s+\eta)}$. 

Also, by hypothesis, there are $\hat y\in\hat F$ and $\theta>0$ such that $B(\hat y, 2\theta)\in [0,1]^d$ and thus, for $k$ sufficiently large there is a point $y_k\in T_k(F)$ so that $B(y_k,\theta)\subset[0,1]^d$. It follows that
$$N_{3r_k}(T_k(F)\cap B(y_k,\theta))\le r_k^{-(s+\eta)}.$$
Defining $r=3r_ka_k^{-1}$, $R=\theta b_k^{-1}$ and $z_k=T_k^{-1}(y_k) \in F$, we get $$N_r(F\cap B(z_k,R))\le \left(\frac{R}{r}\right)^{s+\eta}$$
 (and hence $\dim_L F\le s$). It is easily seen that $r\le R^{1+\epsilon/2}$, therefore, $\dim_{qL} F\le s$.

Finally, suppose that $\dim _{qL}\hat{F}=t$, so given $\eta>0$ there is some $0<\delta<1$ and arbitrarily small $r, R$ with $r\le R^{1+\delta}$ and $\hat y\in \hat F$ such that$$N_r(B(\hat y, R)\cap \hat F)\le (R/r)^{t+\eta}.$$ The geometric condition involving $\theta$ allows us to assume that $B(\hat y, R)\subset [0,1]^d$. Now choose $k$ such that $Cb_{k+1}^{-\epsilon}\le r\le Cb_k^{-\epsilon}$, where $C,\epsilon$ are as before. Then, there is $y\in T_k(F)\cap [0,1]^d$ such that $\|y-\hat y\|\le Cb_k^{-\epsilon}$ and moreover, $B(y, \frac{1}{2}R)\subset[0,1]^d$ (for $k$ sufficiently large). Defining $r_k=3Cb_k^{-\epsilon}$ and $R_k=R/2$, it follows that for $k$ sufficiently large, 
$$N_{r_k}\bigl(T_k(F)\cap B(y, R_k)\bigr)\le N_r(\hat F\cap B(\hat y, R))\le (R/r)^{t+\eta}\approx \left(\frac{R_k}{r_k}\right)^{t+\eta},$$ 
where in the last equivalence we used the fact that $b_{k+1}\leq C'b_{k}$ for some constant $C'$. For an appropiate $z_k\in F$ and a constant $C''$, we get
$$N_{\frac{r_k}{a_k}}\bigl(F\cap B(z, \frac{R_k}{b_k})\bigr)\le C'' \left(\frac{R_k/b_k}{r_k/a_k}\right)^{t+\eta}.$$
It is easily seen that $r_k/a_k\le (R_k/b_k)^{1+\delta/2}$ for $k$ sufficiently large, and therefore $\dim_{qL} F\le \dim_{qL}\hat F$. 
\end{proof}

\begin{corollary}\label{corollary}
Suppose that $\hat F\subset \rr^d$ is a generalized fast tangent of $F$ such that $\dim _B\hat{F}=s$. Then $\dim _{qL}F\leq s\leq \dim _{qA}F$ whenever $\hat F$ contains an interior point of $[0,1]^d$. 
%
%
%
%
%
%
\end{corollary}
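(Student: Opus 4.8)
The plan is to read this off directly from Theorem \ref{theotan}, observing that the hypothesis $\db \hat{F}=s$ forces the upper and lower box dimensions of $\hat{F}$ to coincide, both equalling $s$. So the two one-sided box-dimension bounds furnished by the theorem collapse into a single sandwich around $s$, and no new argument is needed.

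First, for the lower bound $s\leq \dim_{qA}F$, I would invoke part (i) of Theorem \ref{theotan}, whose first conclusion is exactly $\dbi \hat{F}\leq \dim_{qA}F$. Since $\dbi \hat{F}=\db \hat{F}=s$, this immediately gives $s\leq \dim_{qA}F$. Note that this half requires only that $\hat{F}$ be a non-empty generalized fast tangent; it does not use the interior-point hypothesis.

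Second, for the upper bound $\dim_{qL}F\leq s$, I would use part (ii) of Theorem \ref{theotan}, whose first conclusion reads $\dim_{qL}F\leq \dbs \hat{F}$ under precisely the assumption that $\hat{F}$ contains an interior point of $[0,1]^d$. This is exactly the extra hypothesis stated in the corollary, so it applies, and since $\dbs \hat{F}=\db \hat{F}=s$ we obtain $\dim_{qL}F\leq s$. Combining the two inequalities yields $\dim_{qL}F\leq s\leq \dim_{qA}F$, as claimed. I expect no genuine obstacle here: all of the analytic work is contained in Theorem \ref{theotan}, and the corollary is simply the remark that when the box dimension of the tangent exists the two separate box-dimension estimates align at the common value $s$.
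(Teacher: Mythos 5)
Your proposal is correct and is exactly how the corollary follows in the paper: it is stated as an immediate consequence of Theorem \ref{theotan}, with part (i) giving $s=\dbi \hat F\leq \dim_{qA}F$ and part (ii) (under the interior-point hypothesis) giving $\dim_{qL}F\leq \dbs \hat F=s$. Your observation that the lower bound does not need the interior-point hypothesis is also consistent with the theorem as stated.
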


\begin{remark}\label{remtheotan} 
The statements from Theorem \ref{theotan} can be improved.
\begin{enumerate}[a)]
\item  Part (i) only needs the one-sided hypothesis, $p_{H}(\hat{F}, %
 T_{k}(F))\leq Cb_{k}^{-\epsilon } \rightarrow 0$, i.e., $\hat F$ is a fast pseudo-tangent of $F$. This is immediate from the proof. 
 
\item A quick inspection of the proof of (ii) shows that we have $\dim_L F\le\dbs \hat{F}$ even if the convergence to the generalized tangent is not fast.
\end{enumerate}

\end{remark}

\begin{remark}
 Consider the following simple example.
Suppose that $F\subset[0,1]$ and that $(a,b)\subset[0,1]$ with $a,b\in F$, but $(a,b)\cap F=\emptyset$. By considering $T_kx=2^k(x-a)$, it is easily seen that $\hat{F}=\{0\}$ is a fast tangent of $F$, so, unless $\dim_{qL}F=0$, the conclusion in (ii) is false in this case. 
This example illustrates that for the quasi-lower Assouad dimension, an additional hypothesis that ensures the tangent `carries' information about the interior of $F$ is necessary. In the statement of (ii), we have chosen to put this hypothesis directly on $\hat F$. Alternatively, we could have put an additional hypothesis on the approximations of the tangent, for example requiring that $p_H(F\cap B(z_{k},Ca_{k}^{-1}),\hat{F})\le Cb_k^{-\epsilon}$, where $z_k\in F$. The proof is a slight modification of the one given here.
\end{remark}

As an application of our results, we calculate the quasi-Assouad dimensions of a class of planar self-affine sets. Recall that an iterated function system (IFS) is a family $\{f_{1},\ldots ,f_{m}\}$ of contractions $f_{i}:\rr^d\rightarrow \rr^d$, and that the attractor of the IFS  is the unique
non-empty compact set $E$ that satisfies the identity 
\begin{equation*}
E=\bigcup_{i=1}^{m}f_{i}(E).
\end{equation*}
If the maps of the IFS are contracting similarities (affine maps), the attractor is called self-similar set (self-affine set, respectively).

In \cite[Sec. 2.3]{Fr}, Fraser determines the (lower) Assouad dimensions of
self-affine carpets that are the attractor of an IFS in the extended Lalley-Gatzouras and Bara\'{n}ski classes. These fractals, denoted $E,$ are
generated by an IFS with maps of the form $S_{i}(x,y)=(c_{i}x,$ $d_{i}y)+(a_{i},b_{i})$, for some $c_{i},d_{i}\in (0,1)$,  $1\le i\le m$, where $c_i\neq d_i$ for at least one $i$. (See \cite{Fr} for their complete definitions.)  Let $\pi{_1}$ denote the projection onto
the $x$-axis and $\pi_{2}$ the projection onto the $y$-axis. Let $%
{\rm Slice}_{1,i}(E)$ (resp., ${\rm Slice}_{2,i}(E)$) be the vertical (horizontal) slice of $E$ through the fixed point of $%
S_{i}$. Fraser proves that if the self-affine carpet $E$ is of mixed
type, i.e.,  there are $i\neq i'$ such that $c_{i}> d_{i}$ and $c_{i'}< d_{i'}$, then 
\begin{eqnarray}
\dim _{A}E &=&\max_i\max_{k=1,2}\Bigl(\dim _{B}\pi _{k}(E)+\dim _{B}{\rm Slice}_{k,i}(E)\Bigr),\notag \\
\dim _{L}E &=&\min_i\min_{k=1,2}\Bigl(\dim _{B}\pi _{k}(E)+\dim _{B}{\rm Slice}_{k,i}(E)\Bigr).\notag
\end{eqnarray}

Applying Corollary \ref{corollary} we obtain the following result.

\begin{proposition} For the above carpets we have
$\dim _{qA}E=\dim _{A}E$,
and similarly for the quasi-lower Assouad dimension.
\end{proposition}
\begin{proof}
To see this, we give the following sketch of the proof (see \cite[Sec 7.2]{Fr} for more details on the definitions). We assume $\max_i\max_{k=1,2}\Bigl(\dim _{B}\pi _{k}(E)+\dim _{B}{\rm Slice}_{k,i}(E)\Bigr)=\dim _{B}\pi _{1}(E)+\dim _{B}{\rm Slice}_{1,i}(E)$ for some $1\le i\le m$, that now we fix. Consider the approximate square $%
Q_{k}(i,j)$, centred at the point $\bigcap_{l\ge1}S_{j}^{k}\circ
S_{i}^{l}([0,1]^{2})$, with height $d_{j}^{k}$ and width $%
c_{j}^{k}c_{i}^{l(k)}$, where $l(k)$ is an integer chosen so that \[
c_{j}^{k}c_{i}^{l(k)+1}\leq d_{j}^{k}\leq c_{j}^{k}c_{i}^{l(k)}.\] Take
the maps $T_{k}$ that stretch by $d_{j}^{-k}$ in height and by $\left(
c_{j}^{k}c_{i}^{l(k)}\right) ^{-1}$ in width, and map the corner of $Q_{k}$
to the origin. One can check these maps satisfy the required Lipschitz
properties with $b_{k}=d_{j}^{-k}$ and $a_{k}=\left(
c_{j}^{k}c_{i}^{l(k)}\right) ^{-1}$. Take $F_{i}=\pi _{1}(E)\times \pi
_{2}({\rm Slice}_{1,i}(E))$. This is a product of two self-similar sets satisfying
the open set condition and hence \[\dim _{B}F_{i}=\dim
_{B}\pi _{1}(E)+\dim _{B}{\rm Slice}_{1,i}(E).\] From the structure of the carpet, and since $l(k)>k\log_{c_i}(d_j/c_j)$,
it can be seen that \[{\rm dist}_{H}(T_{k}(Q_{k}),F_i)\leq
\max_{n}d_{n}^{l(k)}\le d_{j}^{k \beta}\] for a suitable $\beta >0$. 
Appealing to the corollary gives the result.
\end{proof}


We finish the section with the following remark on the quasi-Assouad dimension of self-similar sets.

\begin{remark}\label{remark:self-similar}
The weak separation property (WSP) is a separation property (on an IFS) that, although weaker than the classical open set condition, ensures nice properties on the attractor; see cite \cite{Ze} for the definition. This property has been essential to describe the behaviour of the Assouad dimension of self-similar sets. If it holds, then $\dim _{B}E=\dim_{A}E $, so in particular, $\dim_{qA} E=\dim_B E$. Moreover, if it does not hold, then $\dim_A E\ge1$; see \cite{FHOR}.  These results establish, in $\rr$, the precise dichotomy that $\dim_A E$ is either $\dim_BE$ or $1$ depending on whether the WSP holds or not.

In absence of the WSP, the behaviour of the quasi-Assouad dimension is unknown. However, it still remains valid in $\rr$ that $\dim_{qA} E=\dim_B E$ in the case that the IFS {\em does not have super-exponential concentration of cylinders}. This is because, under this hypothesis, equality $\dim_A^\theta E=\dim_B E$ holds for all $0<\theta<1$ by \cite[Corollary 4.2]{FY1}, and also $\dim_{qA} F=\lim_{\theta\to1}\dim_A^\theta F$ for any  $F\subset\rr$ by \cite[Corollary 2.2]{FTale}.  

The super-exponential concentration of cylinders property was introduced by Hochman in his celebrated paper \cite{H} to give a substantial improvement on the folklore conjecture that, in absence of exact overlaps, the Hausdorff (and box) dimension of a self-similar set coincides with its similarity dimension. 
This property is verified (trivially) in the case that the IFS produces exact overlaps. Although it seems difficult to check, in general, there are interesting overlapping examples that do not have super-exponential concentration. In particular, there are examples of self-similar sets with box dimension smaller than 1, that do not verify the WSP but also do not have super-exponential concentration of cylinders. This implies that for the quasi-Assouad dimension there is no such dichotomy as that mentioned above for the Assouad dimension case.
\end{remark}

\section{Further properties}\label{section:further_results}

\subsection{Dimensions of sequences with decreasing gaps}

In \cite{GHM} it was shown that sequences in $\mathbb{R}$ with decreasing
gaps have Assouad dimension $0$ or $1$. It is easy to see that the same
statement is true for the quasi-Assouad dimension.

\begin{proposition}
(i) If $\overline{\dim}_B\: E=0$, then $\dim _{qA}E=0$.

(ii) If $E=\{a_{j}\}_{j}$ $\subseteq \lbrack 0,1]$, where $\{
a_{j}-a_{j+1}\}_{j}$ is a decreasing sequence, then $\dim _{qA}E=1$ if $%
\overline{\dim}_B\: E>0$ and otherwise $\dim _{qA}E=0$.
\end{proposition}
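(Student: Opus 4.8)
The plan is to prove the two parts in turn, with part (i) being the easy warm-up and part (ii) requiring the real work.

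\medskip

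\textbf{Part (i).} Recall that $\dim_{qA}E = \lim_{\delta\to 0}\overline{h_E}(\delta)$, and that each $\overline{h_E}(\delta)$ is bounded above by $\overline{h_E}(0) = \dim_A E$. But I want to use the more useful upper bound $\overline{h_E}(\delta) \le \dbs E$ is \emph{not} available directly---rather, the correct handle is that for any fixed $\delta$, the quantity $N_{r,R}(E)$ is controlled by the global covering number $N_r(E)$. Concretely, since $N_{r,R}(E) = \max_{x} N_r(E\cap B(x,R)) \le N_r(E)$, and $\dbs E = 0$ means $N_r(E) \le r^{-\eta}$ for any $\eta>0$ and all small $r$, I would fix $\delta>0$, fix $\eta>0$, and estimate: for $0 < r \le R^{1+\delta}\le R \le b$, we have $N_{r,R}(E) \le N_r(E)\le r^{-\eta}$. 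Writing $r^{-\eta} = (R/r)^{\eta}\cdot R^{-\eta}\cdot r^{\eta}\cdot r^{-\eta}$ is clumsy; instead I would directly bound $r^{-\eta}$ against $(R/r)^{\alpha}$ using $r = R^{1+\delta}$, giving $r^{-\eta} = R^{-(1+\delta)\eta}$ and $(R/r)^{\alpha} = R^{-\delta\alpha}$. So I need $R^{-(1+\delta)\eta}\le c R^{-\delta\alpha}$, i.e. $\alpha \ge (1+\delta)\eta/\delta$. Taking $\eta\to 0$ with $\delta$ fixed forces $\overline{h_E}(\delta)=0$, hence $\dim_{qA}E = 0$.

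\medskip

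\textbf{Part (ii).} The content is showing that decreasing gaps plus $\dbs E>0$ forces $\dim_{qA}E = 1$; the otherwise-clause is immediate from part (i). Write $g_j = a_j - a_{j+1}$ for the decreasing gap sequence (after relabelling so the $a_j$ decrease to $0$). The key geometric feature of decreasing gaps is \emph{local homogeneity of scale}: near a point $a_N$, at the scale $R$ comparable to $g_N$, all the nearby gaps $g_N, g_{N+1},\dots$ are smaller than $g_N$ but the points are not too far apart. The plan is to exhibit, for a suitable sequence of scales, a relative ball $E\cap B(x,R)$ that contains roughly $(R/r)^{1-\eta}$ points that are $r$-separated, with $r$ only polynomially smaller than $R$ (so that $r = R^{1+\delta}$ with $\delta$ small), thereby forcing $\overline{h_E}(\delta)$ close to $1$. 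Because the gaps are decreasing, once I pick $R \approx g_N$, the points $a_N, a_{N+1}, a_{N+2},\dots$ all lie within distance $\le \sum_{k\ge N} g_k$ of $a_N$, and since gaps decrease, consecutive points inside $B(a_N, R)$ are separated by at most $g_N \approx R$ but the \emph{number} of them down to separation $r$ is governed by how many consecutive decreasing gaps exceed $r$.

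\medskip

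The main obstacle, and the crux of the argument, is translating the hypothesis $\dbs E > 0$ into a quantitative lower bound on how slowly the gaps decay along some subsequence. If $\dbs E = s > 0$, then $N_r(E) \ge r^{-(s-\eta)}$ for a sequence of $r\to 0$, which says that at infinitely many scales the sequence is ``spread out'' relative to $r$. I would convert this into: there exist arbitrarily small scales $r$ such that the number of gaps $g_j \ge r$ is at least $r^{-(s-\eta)}$. The delicate point is that a large \emph{total} covering number does not by itself localize into a single relative ball with a favorable ratio $R/r$; I must use the monotonicity of the gaps to argue that the gaps exceeding $r$ cluster in a controlled range of indices, so that choosing $R$ equal to the largest relevant gap makes $B(a_N,R)$ capture a definite proportion of the $r$-separated points, with $R/r$ not too large. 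Here decreasing monotonicity is essential: it guarantees that the points with pairwise separation $\ge r$ sit inside an interval of length comparable to (number of such points)$\times$(largest gap), keeping $R$ polynomially bounded below in terms of $r$. Once this localization is established, the covering estimate $N_{r,R}(E)\ge (R/r)^{1-\eta}$ for arbitrarily small fixed $\delta$ follows, giving $\overline{h_E}(\delta)\ge 1-\eta$ for every $\delta$, hence $\dim_{qA}E \ge 1$; combined with the universal bound $\dim_{qA}E\le 1$ for subsets of $\mathbb{R}$, we conclude $\dim_{qA}E = 1$. I would expect that the cleanest route uses the already-established dichotomy for the Assouad dimension from \cite{GHM} as a template, adapting its scale-selection to respect the $r \le R^{1+\delta}$ constraint of the quasi-Assouad setting.
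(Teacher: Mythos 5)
Your part (i) is correct and is essentially the paper's own argument: bound $N_{r,R}(E)\le N_{r}(E)$ and compare exponents. The only slip is that you run the computation at $r=R^{1+\delta}$ exactly, whereas the definition requires all $r\le R^{1+\delta}$; writing $r=R^{1+\delta'}$ with $\delta'\ge\delta$, the same threshold $\alpha\ge\eta(1+\delta)/\delta$ works because $(1+\delta')/\delta'\le(1+\delta)/\delta$, so this is cosmetic. For part (ii) you should know that the paper does not attempt a direct construction at all: it quotes the Fraser--Yu formula $\dim_{A}^{\theta}E=\min\bigl(\dbs E/(1-\theta),1\bigr)$ for decreasing sequences (\cite{FY1}, Thm.~6.2), combines it with $\dim_{A}^{\theta}E\le\dim_{qA}E$ from (\ref{ineq1}), and lets $\theta\to1$. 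Your direct route is legitimate in principle, but it is incomplete exactly where that cited theorem carries all the weight.

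The gap is in your localization step. Writing $g_j=a_j-a_{j+1}$ and $M(r)=\#\{j:g_j\ge r\}$, your conversion of $\dbs E=s>0$ into $M(r)\ge r^{-(s-\eta)}$ at arbitrarily small scales is fine (it needs the tail estimate $N_r(E)\lesssim M(r)+r^{-1}\sum_{g_j<r}g_j$, which you do not prove but which is routine). The problem is the next claim, that ``choosing $R$ equal to the largest relevant gap makes $B(a_N,R)$ capture a definite proportion of the $r$-separated points, with $R/r$ not too large.'' A ball of radius $R$ in $\mathbb{R}$ contains at most $R/r+1$ pairwise $r$-separated points, so it can capture a fixed proportion of $M(r)\approx r^{-(s-\eta)}$ points only if $R\gtrsim r^{1-s+\eta}$, which is not ``the largest relevant gap''; worse, the gaps exceeding $r$ do \emph{not} cluster: they occupy all indices $1,\dots,M(r)$ and their sizes sweep through every scale from $g_1$ down to $r$, so the head points span an interval of length $\sum_{j\le M(r)}g_j$, typically far larger than $M(r)\,r$. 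If you localize naively (pigeonhole the $M(r)$ points into windows of length $R$), the count per window is about $M(r)R$, and optimizing over admissible $R$ gives at best the exponent $s-\eta$, not $1-\eta$: the global count $M(r)\ge r^{-(s-\eta)}$ only encodes the box dimension, and no choice of window turns it into local dimension $1$ by itself.

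What makes the statement true is a different mechanism: positive box dimension forces the decreasing gap sequence to have, at suitable scales, long runs of \emph{comparable} gaps, and on such a run $E$ looks like an arithmetic progression, whose relative covering numbers are $\approx R/r$ for all scales between the gap size and the run length. One way to finish along your lines: fix small $\epsilon>0$ and, at a scale $r$ with $M(r)\ge r^{-\beta}$, split into two cases according to whether at least half of the gaps that are $\ge r$ lie in $[r,r^{1-\epsilon}]$. If yes, these indices form (by monotonicity) a consecutive block of $\ge\frac12 r^{-\beta}$ points which are $r$-separated with consecutive spacing $\le r^{1-\epsilon}$, and then either the block has small diameter and one ball around it works, or one pigeonholes windows of length $R=r^{1/(1+\delta)}$ inside the block to get $\ge R/r^{1-\epsilon}-1$ separated points, i.e.\ exponent $\ge 1-\epsilon(1+\delta)/\delta$, which is $\ge 1-\eta$ after choosing $\epsilon$ small relative to $\delta$. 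If no, then $M(r^{1-\epsilon})\ge\frac12 r^{-\beta}$, so the counting exponent improves from $\beta$ to $\beta/(1-\epsilon)$; since $M(\rho)\le1/\rho$ always (the gaps sum to at most $1$), this improvement can happen only boundedly many times, so the first case must eventually occur. Some argument of this kind (or simply citing \cite{FY1} as the paper does) is indispensable; as written, your proof asserts the conclusion of this step rather than proving it.
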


\begin{proof}
(i) Fix $\varepsilon ,\delta >0$. The assumption that $\overline{\dim}_B\:
E=0$ ensures that for all sufficiently small $r$, $N_{r}(E)\leq r^{-\delta
\varepsilon }$. Thus for any $R\geq r^{1-\delta }$ and any $x\in E$, we have 
\begin{equation*}
N_{r}(B(x,R)\cap E)\leq N_{r}(E)\leq r^{-\delta \varepsilon }\leq \left( 
\frac{R}{r}\right) ^{\varepsilon },
\end{equation*}%
from whence the conclusion is immediate.

(ii) In \cite[Thm. 6.2]{FY1} it is shown that if $E$ is a sequence with
decreasing gaps, then for all $\theta \in (0,1)$, $\dim _{A}^{\theta }E=\min
\left( \frac{\dbs E}{1-\theta },1\right) $. Thus if $%
\dbs E>0$, then $\dim _{qA}E\geq \sup_{\theta <1}\dim
_{A}^{\theta }E=1$.
\end{proof}

\begin{remark}
Although it is also true that $\dim _{A}E=0$ or $1$ for sequences with
decreasing gaps, the criterion is different. Indeed, as noted in \cite[Ex.
6.3]{FY2}, $E=\{e^{-\sqrt{n}}\mathbb{\}}_{n}$ is a set with decreasing gaps
having $\dbs E=0=\dim _{qA}E$, but $\dim _{A}E=1$.
\end{remark}

\subsection{Quasi-lower Assouad dimension and Hausdorff dimension}

The following proposition establishes the relation between these dimensions. 

\begin{proposition}
\label{Propo-qLHau} If $E$ is a closed subset of $\mathbb{R}^d$, then $\dim _{qL}E\leq \dim _{H}E$.
\end{proposition}

\begin{proof}
Our proof is based on the method of proof of \cite[Theorem 6]{L2}. If $\dim_{qL} E=0$ there is nothing to prove, so assume $\alpha<\dim_{qL}E$ for some $\alpha>0$ and pick any small $\delta>0$ such that $\alpha<\underline{h_E}(\delta)$. We will show $\dim_H E\ge\alpha/(1+\delta)$. This will prove the proposition.
%

Recall that the $r$-packing number of a subset $F\subset\mathbb{R}^d$, $P_{r}(F)$, is the maximum number of disjoint balls of radius $r$ centred in $F$. It is easily seen that there is a  constant $c>0$ such that for any bounded $F\subseteq \rr^d$ and $r>0$, we have $P_{2r}(F)\ge cN_r(F)$. Therefore, since $\alpha<\underline{h_E}(\delta)$, there is $\rho_\delta>0$ such that for any $x\in E$ and any $%
r\leq R^{1+\delta }\leq R\leq \rho _{\delta },$ 
\begin{equation*}
P_{2r}(B(x,R)\bigcap E)\geq (R/r)^{\alpha }.
\end{equation*}%
In particular, $P_{2R^{1+\delta }}(B(x,R)\bigcap E)\geq R^{-\delta \alpha }$.

Fix $x\in E$ and $R_{1}\leq \rho _{\delta }$. There are $x_{1},\ldots
,x_{R_{1}^{-\delta \alpha }}$ points in $E\bigcap B(x,R_{1})$ such that the
balls $B(x_{j},2R_{1}^{1+\delta })$ are disjoint for $j=1,\ldots
,R_{1}^{\delta \alpha }$. Now let $R_{2}=R_{1}^{1+\delta }$ and notice that 
\begin{equation*}
B(x_{j},2R_{2})\subseteq B(x,R_{1}+2R_{1}^{1+\delta })\subseteq B(x,2R_{1})
\end{equation*}%
(as we can take $2R_{1}^{\delta }<1$). We let $C_{0}=B(x,2R_{1})$, $%
C_{1}=\bigcup_{j=1}^{R_{1}^{-\delta \alpha }}B(x_{j},2R_{2})$ and refer to
the balls $B(x_{j},2R_{2})$ as the Cantor balls of level $1$.

Repeating this procedure, we see that for each $j$, 
\begin{equation*}
P_{2R_{2}^{1+\delta }}(B(x_{j},R_{2})\bigcap E)\geq R_{2}^{-\delta \alpha }
\end{equation*}%
so there are $x_{j,1},\ldots ,x_{j,R_{2}^{-\delta \alpha }}\in
B(x_{j},R_{2})\bigcap E$, such that $\{B(x_{j,k,}2R_{2}^{1+\delta
})\}_{k=1}^{R_{2}^{-\delta \alpha }}$ are pairwise disjoint. Put $%
R_{3}=2R_{2}^{1+\delta }$. Furthermore, 
\begin{equation*}
B(x_{j,k},2R_{3})\subseteq B(x_{j},R_{2}+2R_{3})\subseteq
B(x_{j},2R_{2})\subseteq C_{1},
\end{equation*}%
so all these balls are disjoint. Let 
\begin{equation*}
C_{2}=\bigcup_{k=1}^{R_{2}^{-\delta \alpha }}\bigcup_{j=1}^{R_{1}^{-\delta
\alpha }}B(x_{j,k},2R_{3})
\end{equation*}%
and call these the Cantor balls of level 2.

Inductively, given disjoint balls $B(x_{j_{1},\ldots ,j_{k-1}},2R_{k})$, then, for $l=1,\ldots ,R_{k}^{-\delta \alpha },$ we find points 
\begin{equation*}
x_{j_{1},\ldots ,j_{k-1},l}\in B(x_{j_{1},\ldots
,j_{k-1}},R_{k})\bigcap E
\end{equation*} 
such that $\{B(x_{j_{1},\ldots ,j_{k-1},l},2R_{k}^{1+\delta })\}_{l}$ are
disjoint. Put $R_{k+1}=R_{k}^{1+\delta }$ and let 
\begin{equation*}
C_{k}=\bigcup_{\substack{ j_{i}\in \{1,\ldots ,R_{i}^{-\delta \alpha }\}  \\ %
i=1,\ldots ,k}}B(x_{j_{1},\ldots ,j_{k}},2R_{k+1})\subseteq C_{k-1},
\end{equation*}%
$C_{k}$ being a union of balls of level $k$.

We have $R_{k+1}=R_{k}^{1+\delta }=R_{1}^{(1+\delta )^{k}}$. Also, notice $%
C_{k}$ is disjoint union of $M$ balls, where 
\begin{equation*}
M=\prod_{j=1}^{k}R_{j}^{-\delta \alpha }=\prod_{j=1}^{k}R_{1}^{-\delta
\alpha (1+\delta )^{j-1}}=R_{1}^{-\alpha ((1+\delta )^{k}-1)}.
\end{equation*}%
Let $C=\bigcap_{k=1}^{\infty }C_{k}$. As each element of $C$ is a limit
point of the centre of the Cantor balls and $E$ is closed, then $C\subseteq
E $. We will use the mass distribution principle to check $\dim _{H}C\geq
\alpha /(1+\delta )$; see \cite[Proposition 2.1]{Fal}.

Let $\mu $ be the probability measure that assigns equal mass on the Cantor
balls of each level, i.e., each ball in $C_{k}$ gets measure $%
M^{-1}=R_{1}^{\alpha ((1+\delta )^{k}-1)}$. We want to show that there is
some constant $A=A(\alpha ,E)$ such that $\mu (U)\leq A(\text{diam}(U))^{%
\frac{\alpha }{1+\delta }}$ for all Borel sets $U.$

Without lost of generality we assume $U=B(y,r)$, where $2R_{k+1}<r\leq
2R_{k} $, $y\in E$. Any ball of radius $2R_{k}$ that intersects $U$ will
have its centre in $B(y,4R_{k})$. Since $X$ is doubling, there is a constant 
$A_{1}$ such that $P_{2R_{k}}(B(y,4R_{k}))\leq A_{1}$ for all $y\in E$ and
all $k$. As Cantor balls at level $k-1$ are disjoint, of radius $2R_{k}$ and
centred in $E$, at most $A_{1}$ of such balls can intersect $U$. Thus $U$
intersects at most $A_{1}R_{k}^{-\delta \alpha }$ level $k$ Cantor balls, so 
\begin{equation*}
\mu (U)\leq A_{1}R_{k}^{-\delta \alpha }\cdot R_{1}^{\alpha ((1+\delta
)^{k}-1)}=\frac{A_{1}}{R_{1}^{\alpha }}R_{1}^{\alpha (1+\delta )^{k-1}}\leq
A(\text{diam}(U)^{\frac{\alpha }{1+\delta }}.
\end{equation*}%
Hence the mass distribution principle implies $\alpha
/(1+\delta )\leq \dim _{H}C\leq \dim _{H}E,$ completing the proof.
\end{proof}

\subsection{Different values for different dimensions}

Provided the upper and lower box dimensions are to be distinct,
then given any six numbers in $[0,1],$ appropriately ordered, there is a
compact set $E\subseteq \lbrack 0,1]$ which have those numbers as the
Assouad-type and box dimensions. We will construct Cantor sets to illustrate
this, making use of the following formulas for the box and quasi-Assouad
dimensions of a Cantor set $E$ with ratios of dissection $r_{k}$ at step $k$%
; see \cite{GHM} and \cite{LX}. 
\begin{equation*}
\dbs E=\limsup_{n}\frac{n\log 2}{|\log r_{1}\cdot \cdot
\cdot r_{n}|},\text{ }\dim _{A}E=\limsup_{n}\sup_{k}\frac{n\log 2}{|\log
r_{k+1}\cdot \cdot \cdot r_{k+n}|}
\end{equation*}
If $\inf r_k>0$ , then 
\begin{equation*}
\dim _{qA}E=\lim_{\delta \rightarrow 0}\limsup_{n}\sup_{k\in S_{n,\delta }}%
\frac{n\log 2}{|\log r_{k+1}\cdot \cdot \cdot r_{k+n}|},
\end{equation*}%
where $S_{n,\delta }=\{k:r_{k+1}\cdot \cdot \cdot r_{k+n}\leq (r_{1}\cdot
\cdot \cdot r_{k})^{\delta }\}$. For the lower box and (quasi)-lower Assouad
dimensions replace sup and lim sup by inf and lim inf respectively.

\begin{example}
\label{strict}Assume $1\leq a\leq \alpha \leq u<v\leq \beta $ $\leq b<\infty 
$ are given. There is a Cantor set $E\subseteq \lbrack 0,1]$ with $\dim
_{A}E=1/a,$ $\dim _{qA}E=1/\alpha $, $\dbs E=1/u$, $%
\dbi E=1/v$, $\dim _{qL}E=1/\beta $ and $\dim _{L}E=1/b$.

The example we will construct is a generalization of \cite[Ex. 1.18]{LX} and
so we will only sketch the ideas. To begin, we choose a sequence of integers 
$s_{j}$ tending to infinity very rapidly. For convenience, put $%
t_{2j}=s_{2j}\left( \frac{v-\alpha}{u-\alpha }\right) $ and $t_{2j+1}=s_{2j+1}%
\left( \frac{\beta-u}{\beta -v}\right) $. (If $u=\alpha$ put $t_{2j}=s_{2j}$ and similarly if $v=\beta$.) We will define the ratios of
dissection at the various steps as follows:%
\begin{equation*}
\begin{array}{cc}
\text{Ratio} & \text{At steps} \\ 
2^{-v} & t_{2j-1}+j,\dots ,s_{2j} \\ 
2^{-\alpha } & s_{2j}+1,\ldots ,t_{2j} \\ 
2^{-a} & t_{2j}+1,\ldots ,t_{2j}+j \\ 
2^{-u} & t_{2j}+j+1,\ldots ,s_{2j+1} \\ 
2^{-\beta } & s_{2j+1}+1,\ldots ,t_{2j+1} \\ 
2^{-b} & t_{2j+1}+1,\ldots ,t_{2j+1}+j%
\end{array}%
\end{equation*}%
Provided $s_{j}$ tends to infinity sufficiently quickly, the ratios at steps 
$\{t_{2j}+1,\ldots ,t_{2j}+j\}$ and $\{t_{2j+1}+1,\ldots ,t_{2j+1}+j\}$ will
not influence the long run averages that determine the box and quasi-(lower)
Assouad dimensions. But these ratios will determine the (lower) Assouad
dimensions. The construction ensures that the quasi-Assouad dimension is
determined by choosing $R$ to be the length of the Cantor intervals at step $%
s_{2j}$ and $r$ to be the length of Cantor intervals at step $t_{2j}$, while
the quasi-lower dimensions arise with $R$ the length at step $s_{2j+1}$ and $%
r$ the length at step $t_{2j+1}$. The choice of $t_{2j}$ and $t_{2j+1}$ are
made to ensure that the geometric means of the ratios stay within the range 
$[2^{-v},2^{-u}]$ (in the limit) so that the box dimensions are determined
along the subsequences of lengths of Cantor intervals at steps $s_{j}$. The
details are left to the reader.
\end{example}

\subsection{Dimensions of orthogonal projections}

For our last example, we will show that, as in the case of Assouad dimension
(see \cite{Fr} and also \cite{FO}), the quasi-Assouad dimension may increase
under orthogonal projections. As before, we will let $\pi _{x}$ (resp., $\pi
_{y})$ denote the projection onto the $x$ (resp. $y)$ axis.

\begin{proposition}
There is a subset $E\subseteq \mathbb{R}^{2}$ such that \[\dim _{qA}\pi
_{x}(E)=1>1/2=\dim _{qA}E.\]
\end{proposition}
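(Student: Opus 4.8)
The plan is to construct a planar set $E$ whose projection onto the $x$-axis is large (quasi-Assouad dimension $1$) while $E$ itself is geometrically sparse in a way that keeps its quasi-Assouad dimension at $1/2$. The natural model is a Cantor-like set in $\mathbb{R}^2$ built so that the $x$-coordinates realize the example $F_\alpha$ from the earlier Example (which satisfies $\dim_{qA}=\alpha$ but $\dim_A=1$ and has $[0,1]$ as a generalized tangent), while the $y$-coordinates are arranged to break up any clustering that the projection creates. First I would take the projection $\pi_x(E)$ to be a set that has $\dim_{qA}\pi_x(E)=1$ — for instance a sequence with decreasing gaps and positive upper box dimension, or the set $F_\alpha$ with the magnification behaviour that forces the quasi-Assouad dimension up to $1$. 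The point of using such a base on the $x$-axis is that its inhomogeneity is concentrated at scales $r\le R^{1+\delta}$ in exactly the regime that the quasi-Assouad dimension detects.

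Next I would lift this to $E\subseteq\mathbb{R}^2$ by assigning $y$-coordinates that spread the preimages of each $x$-cluster across a large vertical range, with the side lengths in the $y$-direction chosen to shrink at a controlled rate relative to the $x$-direction. The key design principle is that whenever the projection exhibits a dense cluster of points at a small scale $r$ inside a window of size $R$ (which is what drives $\dim_{qA}\pi_x(E)=1$), the corresponding points in $E$ are separated in the $y$-direction, so that covering $E\cap B(z,R')$ at scale $r'$ requires far fewer balls than covering the projected cluster. Concretely I would arrange the two families of ratios of dissection so that at the critical scales the quantity $N_{r',R'}(E)$ grows only like $(R'/r')^{1/2}$, using the box and quasi-Assouad formulas for product-type or Cantor-type constructions recorded in the previous subsection.

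To verify the two claims I would compute $\dim_{qA}E$ and $\dim_{qA}\pi_x(E)$ separately. For the projection, I would invoke the explicit formula for the quasi-Assouad dimension of the relevant one-dimensional construction (or the Assouad-spectrum characterization via $\dim_{qA}=\sup_{\theta<1}\dim_A^\theta$ when it applies), confirming the value $1$. For $E$ itself, I would bound $N_{r,R}(E)$ uniformly over centres and over the admissible relative scales $r\le R^{1+\delta}$, showing that the vertical separation caps the count; taking $\delta\to0$ then gives $\dim_{qA}E=1/2$ from $\overline{h_E}(\delta)\to 1/2$. The inequalities in \eqref{ordering} can be used to pin down the box dimensions along the way, which feed into these counting estimates.

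The main obstacle will be the simultaneous control of the two scales. Projection is a Lipschitz map and so cannot increase ordinary box dimension, yet it can increase quasi-Assouad dimension precisely because the worst local-scale behaviour in the projection need not come from a single vertical fibre; it can be assembled from many fibres that are vertically separated in $E$ but collapse together under $\pi_x$. Designing the ratios so that the projection's clustering occurs in the depth regime the quasi-Assouad dimension sees, while the vertical spreading in $E$ remains strong enough at those same scales to hold $\overline{h_E}(\delta)$ down to $1/2$, is the delicate part. The safest route is to make the two coordinate constructions share step indices but with carefully staggered ratio schedules, analogous to the staggered schedule used in Example~\ref{strict}, and then to check that the separated-fibre count and the projected-cluster count genuinely differ by the square-root factor at the relevant $R,r$ and for all $\delta>0$.
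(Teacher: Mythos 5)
Your overall mechanism is the right one --- the paper's own example is exactly a ``graph-like'' set in which each point of a horizontally clustered family is paired with a vertically separated $y$-coordinate (cluster $j$ consists of the points $x_{ij}=2^{-j}+(i-1)2^{-2j}$, paired with the step-$j$ gap endpoints $y_{ij}$ of the ratio-$1/4$ Cantor set) --- but your proposal stops at the level of a plan and never produces a proof. The two things that constitute the actual content of the proposition are missing. First, no set $E$ is ever specified: you describe desiderata (``assign $y$-coordinates that spread the preimages of each $x$-cluster,'' ``arrange the ratios so that $N_{r',R'}(E)$ grows only like $(R'/r')^{1/2}$''), but these are restatements of the goal, not a construction. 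Second, the counting argument showing that the vertical spreading really caps $N_{r,R}(E)$ at roughly $(R/r)^{1/2}$ for all centres, all $R$, and all $r\le R^{1+\delta}$ is exactly the ``delicate part'' you defer; in the paper this is the bulk of the proof (a ball of radius $R\approx 2^{-2s}$ meets only one step-$s$ Cantor interval vertically, and the points of each cluster $E_m$, $t<m\le 2t$, landing in that interval group into $2^{t-s}$ squares of side $2^{-2t}\approx r$, giving $N_{r}(B(x_0,R)\cap E)\le (t+1)2^{t-s}+2^{t-s+1}$, which is $(R/r)^{1/2}$ up to the harmless factor $t+1$). Without some such computation there is no proof that $\dim_{qA}E\le 1/2$, and hence no proof of the proposition.

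Two specific elements of your plan would also need repair. Using $F_\alpha$ as the projection cannot work: by the Example in Section 3 it satisfies $\dim_{qA}F_\alpha=\alpha<1$ (it is its \emph{Assouad} dimension that equals $1$), so it fails your own requirement $\dim_{qA}\pi_x(E)=1$; your other suggestion, a decreasing-gap sequence of positive upper box dimension, is fine and is in effect what the paper uses. More seriously, the ``product-type'' route you invoke when appealing to the Cantor-set formulas is provably impossible: if $E=A\times B$ then $\pi_x(E)=A$, and the product lemma of Section 2 (applied after swapping the two isometric factors) gives $\dim_{qA}(A\times B)\ge \dim_{qL}B+\dim_{qA}A\ge\dim_{qA}\pi_x(E)$, so the projection of a product can never have strictly larger quasi-Assouad dimension than the set itself. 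Any successful construction must be genuinely non-product: the correlation between the $x$- and $y$-coordinates is not an optional convenience but the essential point, and it is precisely the part your proposal leaves unexecuted.
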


\begin{proof}
We will construct an example to show this. For each $j$ and $i=1,\ldots
,2^{j},$ let $x_{ij}=2^{-j}+(i-1)2^{-2j}$. The points $x_{ij}$ belong to $%
[2^{-j},2^{-(j-1)})$ and are spaced $2^{-2j}$ apart. Let $y_{ij},i=1,\ldots
,2^{j},$ be the endpoints, ordered from left to right, of the gaps created
at step $j$ in the standard construction of the Cantor set with ratio of
dissection $1/4$; these gaps have length $2^{-2j+1}$. Put $%
E_{j}=\{(x_{ij},y_{ij}):i=1,\ldots ,2^{j}\}$ and $E=\cup _{j}E_{j}$. See
Figure 2 below. Of course, $\pi _{x}(E)=\bigcup
_{j}\{x_{ij}:i=1,\ldots ,2^{j}\}$. By checking $N_{r}(B(x_{0},R)\cap \pi
_{x}(E))$ for $x_{0}=2^{-j},$ $R=2^{-j}$ and $r=2^{-2j},$ it is easy to see
that $\dim _{qA}\pi _{x}(E)=1$. 

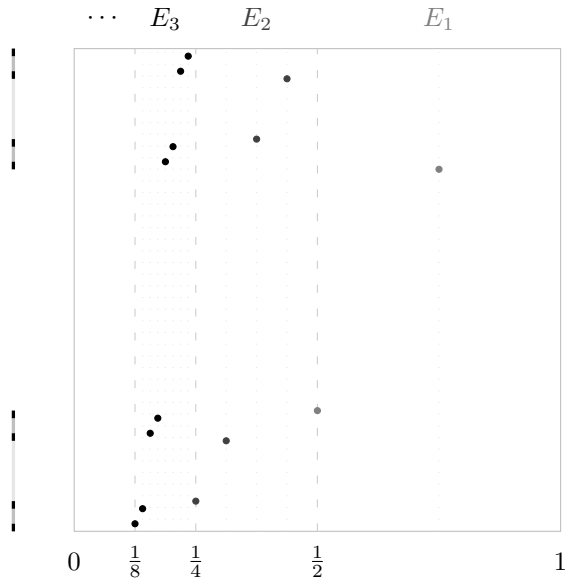
\begin{figure}[tbp]\label{figureb}
\centering
\begin{tikzpicture}[scale=.4]
\draw[draw=gray!50] (0,0) rectangle (16,16);

\draw[loosely dashed, draw=gray!40] (8,0) -- (8,16);
\draw[loosely dotted, draw=gray!20] (12,0) -- (12,16);
\filldraw[black!50](8,4) circle (.10cm);
\filldraw[black!50](12,12) circle (.10cm);

\draw[loosely dashed, draw=gray!40] (4,0) -- (4,16);
\draw[loosely dotted, draw=gray!20] (5,0) -- (5,16);
\draw[loosely dotted, draw=gray!20] (6,0) -- (6,16);
\draw[loosely dotted, draw=gray!20] (7,0) -- (7,16);
\filldraw[black!75](4,1) circle (.10cm);
\filldraw[black!75](5,3) circle (.10cm);
\filldraw[black!75](6,13) circle (.10cm);
\filldraw[black!75](7,15) circle (.10cm);

\draw[loosely dashed, draw=gray!40] (2,0) -- (2,16);
\draw[loosely dotted, draw=gray!20] (2.25,0) -- (2.25,16);
\draw[loosely dotted, draw=gray!20] (2.5,0) -- (2.5,16);
\draw[loosely dotted, draw=gray!20] (2.75,0) -- (2.75,16);
\draw[loosely dotted, draw=gray!20] (3,0) -- (3,16);
\draw[loosely dotted, draw=gray!20] (3.25,0) -- (3.25,16);
\draw[loosely dotted, draw=gray!20] (3.5,0) -- (3.5,16);
\draw[loosely dotted, draw=gray!20] (3.75,0) -- (3.75,16);
\filldraw(2,.25) circle (.10cm);
\filldraw(2.25,.75) circle (.10cm);
\filldraw(2.5,3.25) circle (.10cm);
\filldraw(2.75,3.75) circle (.10cm);
\filldraw(3,12.25) circle (.10cm);
\filldraw(3.25,12.75) circle (.10cm);
\filldraw(3.5,15.25) circle (.10cm);
\filldraw(3.75,15.75) circle (.10cm);

\draw[black!50](12,17) node {$E_1$};
\draw[black!75](6,17) node {$E_2$};
\draw(3,17) node {$E_3$};
\draw(1,17) node {$\cdots$};

\draw[very thick, gray!20] (-2,0) -- (-2,4);
\draw[very thick, gray!20] (-2,12) -- (-2,16);

\draw[very thick, gray!50] (-2,0) -- (-2,1);
\draw[very thick, gray!50] (-2,3+0) -- (-2,3+1);
\draw[very thick, gray!50] (-2,12+0) -- (-2,12+1);
\draw[very thick, gray!50] (-2,15+0) -- (-2,15+1);

\draw[very thick] (-2,0) -- (-2,.25);
\draw[very thick] (-2,.75) -- (-2,1);
\draw[very thick] (-2,3+0) -- (-2,3+.25);
\draw[very thick] (-2,3+.75) -- (-2,3+1);
\draw[very thick] (-2,12+0) -- (-2,12+.25);
\draw[very thick] (-2,12+.75) -- (-2,12+1);
\draw[very thick] (-2,12+3+0) -- (-2,12+3+.25);
\draw[very thick] (-2,12+3+.75) -- (-2,12+3+1);

\draw(0,-1) node {$0$};
\draw(16,-1) node {$1$};
\draw(8,-1) node {$\frac{1}{2}$};
\draw(4,-1) node {$\frac{1}{4}$};
\draw(2,-1) node {$\frac{1}{8}$};

\end{tikzpicture}
\caption{The sets $E_{1}$, $E_{2}$ and $E_{3}$ in the construction of $E$.}
\end{figure}

To determine the quasi-Assouad dimension of $E$, it is convenient to take as
the definition of a `ball', $B(x_{0},R)$ in $\mathbb{R}^{2},$ the square
with centre $x_{0}$ and sides of length $R$. Fix such a ball with $%
x_{0}=(x_{ij},y_{ij})\in E$ and assume $2^{-2(s+1)}<R\leq 2^{-2s}$ for some $%
s\in \mathbb{N}$. The size of $R$ ensures that the interval $\pi
_{y}(B(x_{0},R))$ can intersect only one Cantor interval of step $s$. Choose
any $r<R$, say $2^{-2t}<r\leq 2^{-2(t-1)}$.

First, note that%
\begin{equation*}
\bigl(B(x_{0},R)\cap E\bigr)\cap \bigl(\lbrack 0,2^{-2t})\times \lbrack 0,1]%
\bigr):=\Omega
\end{equation*}%
is contained in $[0,2^{-2t})\times ($union of Cantor intervals of step $t$
contained in $\pi _{y}(B(x_{0},R))$. There are at most $2^{t-s}$ such Cantor
intervals, each of length $2^{-2t}$. Hence $N_{r}(\Omega )\leq 2^{t-s}$.

Next, for each $m\in \{t+1, \ldots,2t\}$ consider the elements of 
\begin{equation*}
\bigl( B(x_{0},R)\cap E\big) \cap \bigl(\lbrack 2^{-m},2^{-(m-1)})\times
\lbrack 0,1]\bigr):=\Omega _{m}.
\end{equation*}%
The $y$-coordinates of these points are the endpoints of the gaps at step $m$
lying within the one Cantor interval of step $s$ that $\pi _{y}(B(x_{0},R))$
intersects. There are $2^{m-t}$ of these contained within each Cantor
sub-interval of step $t$. As the distance between consecutive $x$%
-coordinates is $2^{-2m}$, the total horizontal distance between the points
whose $y$-coordinates lie in a (fixed) Cantor interval of step $t$ is $%
2^{-2m}(2^{m-t})=2^{-(m+t)}\leq 2^{-2t}$, while the total vertical distance
is the width of the Cantor subinterval, $2^{-2t}$. Consequently, such points
lie within a square of side length $2^{-2t}$ and hence we can cover $\Omega
_{m}$ with $2^{t-s}$ squares of side length $2^{-2t}$ for each such $m$.

Finally, observe that the cardinality of the remainder of $B(x_{0},R)\cap E,$
which is contained in $[2^{-t},1]\times \lbrack 0,1]$, is dominated by $2$
times the number of gaps of step $\leq t$ within a Cantor interval of step $%
s,$ and this is bounded above by $2^{t-s+1}$. Combining together all these
observations, we see that 
\begin{equation*}
N_{r}(B(x_{0},R)\cap E)\leq (t+1)2^{t-s}+2^{t-s+1}.
\end{equation*}%
It follows that for each $\delta >0$, $\overline{h_{E}}(\delta )\leq 1/2$
and thus $\dim _{qA}E\leq 1/2$. It is not difficult to see that these
estimates are essentially sharp and thus we actually have $\dim _{qA}E=1/2$. 
\end{proof}

\begin{remark}
The classical Marstrand projection theorem, and its more recent variants, (c.f. \cite{Fal15}) states that the orthogonal
projections of planar sets have the same dimension at almost every angle,
where here dimension can be Hausdorff, upper/lower box or packing. This is
not the case for the Assouad dimension. Indeed, it is shown in \cite{FO} that for any $s$ with $\log_53<s<1$, there exists a self-similar set $F \subset \mathbb{R}^2$ and two non-empty intervals $I, J$ such that $\dim_A \pi_{\theta} F=s$ for all $\theta \in I$, while $\dim_A \pi_{\theta}F=1 $ for almost all $\theta \in J$. Here $\pi_{\theta}$ denotes the projection onto the line passing through the origin with angle $\theta$.

It is unknown what the situation is for the quasi-Assouad dimension. The set $F$ from \cite{FO} is not helpful in resolving this problem. Since it is a Sierpinski triangle with contraction factor $c$, for some $c\in(1/5,1/3)$, its orthogonal projections are self-similar sets $F_t$ attractors of iterated functions systems of the form $\{cx, cx+1-c, cx+t\}$, up to rescaling. Then, if $c$ is algebraic, for almost every $t$, these projections does not have super exponential contraction of cylinders (see Theorem 1.6 in \cite{H} and the comment after its proof),  and hence by Remark\ref{remark:self-similar}, for almost every $t$  we have $\dim_{qA} F_t=\dim_B F_t$, and hence the Marstrand projection theorem imply that the quasi-Assouad dimensions of the projections of $F$ are constant almost everywhere. On the other hand, if $c$ is not algebraic, it is unknown if there are super exponential contraction of cylinders in the projections, so it is unknown their quasi-Assouad dimension.
\end{remark}



\begin{section}{Acknowledgements}
The work of I. Garc\'ia was partially supported by a grant from the Simons Foundation. The work of K. Hare was supported by NSERC 2016 03719. 
\end{section}

\end{document}